\begin{document}

\allowdisplaybreaks

\newif\ifdraft 
\drafttrue
\newcommand{\DRAFTNUMBER}{5}

\newcommand{\DATE}{\today\ \ifdraft(Draft \DRAFTNUMBER)\fi}
\newcommand{\TITLE}{On a Dynamical Brauer--Manin Obstruction}
\newcommand{\TITLERUNNING}{On a Dynamical Brauer--Manin Obstruction}




\newtheorem{theorem}{Theorem}
\newtheorem{lemma}[theorem]{Lemma}
\newtheorem{conjecture}[theorem]{Conjecture}
\newtheorem{proposition}[theorem]{Proposition}
\newtheorem{corollary}[theorem]{Corollary}
\newtheorem*{claim}{Claim}

\theoremstyle{definition}
\newtheorem{question}{Question}
\renewcommand{\thequestion}{\Alph{question}} 
\newtheorem*{definition}{Definition}
\newtheorem{example}[theorem]{Example}

\theoremstyle{remark}
\newtheorem{remark}[theorem]{Remark}
\newtheorem*{acknowledgement}{Acknowledgements}



\newenvironment{notation}[0]{%
  \begin{list}%
    {}%
    {\setlength{\itemindent}{0pt}
     \setlength{\labelwidth}{4\parindent}
     \setlength{\labelsep}{\parindent}
     \setlength{\leftmargin}{5\parindent}
     \setlength{\itemsep}{0pt}
     }%
   }%
  {\end{list}}

\newenvironment{parts}[0]{%
  \begin{list}{}%
    {\setlength{\itemindent}{0pt}
     \setlength{\labelwidth}{1.5\parindent}
     \setlength{\labelsep}{.5\parindent}
     \setlength{\leftmargin}{2\parindent}
     \setlength{\itemsep}{0pt}
     }%
   }%
  {\end{list}}
\newcommand{\Part}[1]{\item[\upshape#1]}

\renewcommand{\a}{\alpha}
\renewcommand{\b}{\beta}
\newcommand{\g}{\gamma}
\renewcommand{\d}{\delta}
\newcommand{\e}{\epsilon}
\newcommand{\f}{\varphi}
\newcommand{\bfphi}{{\boldsymbol{\f}}}
\renewcommand{\l}{\lambda}
\renewcommand{\k}{\kappa}
\newcommand{\lhat}{\hat\lambda}
\newcommand{\m}{\mu}
\newcommand{\bfmu}{{\boldsymbol{\mu}}}
\renewcommand{\o}{\omega}
\renewcommand{\r}{\rho}
\newcommand{\rbar}{{\bar\rho}}
\newcommand{\s}{\sigma}
\newcommand{\sbar}{{\bar\sigma}}
\renewcommand{\t}{\tau}
\newcommand{\z}{\zeta}

\newcommand{\D}{\Delta}
\newcommand{\G}{\Gamma}
\newcommand{\F}{\Phi}

\newcommand{\ga}{{\mathfrak{a}}}
\newcommand{\gA}{{\mathfrak{A}}}
\newcommand{\gb}{{\mathfrak{b}}}
\newcommand{\gm}{{\mathfrak{m}}}
\newcommand{\gn}{{\mathfrak{n}}}
\newcommand{\go}{{\mathfrak{o}}}
\newcommand{\gO}{{\mathfrak{O}}}
\newcommand{\gp}{{\mathfrak{p}}}
\newcommand{\gP}{{\mathfrak{P}}}
\newcommand{\gq}{{\mathfrak{q}}}
\newcommand{\gR}{{\mathfrak{R}}}

\newcommand{\Abar}{{\bar A}}
\newcommand{\Ebar}{{\bar E}}
\newcommand{\Kbar}{{\bar K}}
\newcommand{\Pbar}{{\bar P}}
\newcommand{\Sbar}{{\bar S}}
\newcommand{\Tbar}{{\bar T}}
\newcommand{\ybar}{{\bar y}}
\newcommand{\phibar}{{\bar\f}}

\newcommand{\Acal}{{\mathcal A}}
\newcommand{\Bcal}{{\mathcal B}}
\newcommand{\Ccal}{{\mathcal C}}
\newcommand{\Dcal}{{\mathcal D}}
\newcommand{\Ecal}{{\mathcal E}}
\newcommand{\Fcal}{{\mathcal F}}
\newcommand{\Gcal}{{\mathcal G}}
\newcommand{\Hcal}{{\mathcal H}}
\newcommand{\Ical}{{\mathcal I}}
\newcommand{\Jcal}{{\mathcal J}}
\newcommand{\Kcal}{{\mathcal K}}
\newcommand{\Lcal}{{\mathcal L}}
\newcommand{\Mcal}{{\mathcal M}}
\newcommand{\Ncal}{{\mathcal N}}
\newcommand{\Ocal}{{\mathcal O}}
\newcommand{\Pcal}{{\mathcal P}}
\newcommand{\Qcal}{{\mathcal Q}}
\newcommand{\Rcal}{{\mathcal R}}
\newcommand{\Scal}{{\mathcal S}}
\newcommand{\Tcal}{{\mathcal T}}
\newcommand{\Ucal}{{\mathcal U}}
\newcommand{\Vcal}{{\mathcal V}}
\newcommand{\Wcal}{{\mathcal W}}
\newcommand{\Xcal}{{\mathcal X}}
\newcommand{\Ycal}{{\mathcal Y}}
\newcommand{\Zcal}{{\mathcal Z}}

\renewcommand{\AA}{\mathbb{A}}
\newcommand{\BB}{\mathbb{B}}
\newcommand{\CC}{\mathbb{C}}
\newcommand{\FF}{\mathbb{F}}
\newcommand{\GG}{\mathbb{G}}
\newcommand{\NN}{\mathbb{N}}
\newcommand{\PP}{\mathbb{P}}
\newcommand{\QQ}{\mathbb{Q}}
\newcommand{\RR}{\mathbb{R}}
\newcommand{\ZZ}{\mathbb{Z}}

\newcommand{\bfa}{{\mathbf a}}
\newcommand{\bfb}{{\mathbf b}}
\newcommand{\bfc}{{\mathbf c}}
\newcommand{\bfe}{{\mathbf e}}
\newcommand{\bff}{{\mathbf f}}
\newcommand{\bfg}{{\mathbf g}}
\newcommand{\bfp}{{\mathbf p}}
\newcommand{\bfr}{{\mathbf r}}
\newcommand{\bfs}{{\mathbf s}}
\newcommand{\bft}{{\mathbf t}}
\newcommand{\bfu}{{\mathbf u}}
\newcommand{\bfv}{{\mathbf v}}
\newcommand{\bfw}{{\mathbf w}}
\newcommand{\bfx}{{\mathbf x}}
\newcommand{\bfy}{{\mathbf y}}
\newcommand{\bfz}{{\mathbf z}}
\newcommand{\bfA}{{\mathbf A}}
\newcommand{\bfF}{{\mathbf F}}
\newcommand{\bfB}{{\mathbf B}}
\newcommand{\bfD}{{\mathbf D}}
\newcommand{\bfG}{{\mathbf G}}
\newcommand{\bfI}{{\mathbf I}}
\newcommand{\bfM}{{\mathbf M}}
\newcommand{\bfzero}{{\boldsymbol{0}}}

\newcommand{\Adele}{\textsf{\upshape A}}
\newcommand{\Ahat}{\hat{A}}
\newcommand{\Adot}{A(\Adele_K)_{\bullet}}
\newcommand{\Aut}{\operatorname{Aut}}
\newcommand{\Br}{\operatorname{Br}}  
\newcommand{\Closure}{\textsf{\upshape C}} 
\newcommand{\Disc}{\operatorname{Disc}}
\newcommand{\Div}{\operatorname{Div}}
\newcommand{\End}{\operatorname{End}}
\newcommand{\Fbar}{{\bar{F}}}
\newcommand{\FOD}{\textup{FOM}}
\newcommand{\FOM}{\textup{FOD}}
\newcommand{\Gal}{\operatorname{Gal}}
\newcommand{\GL}{\operatorname{GL}}
\newcommand{\Index}{\operatorname{Index}}
\newcommand{\Image}{\operatorname{Image}}
\newcommand{\liftable}{{\textup{liftable}}}
\newcommand{\hhat}{{\hat h}}
\newcommand{\Ksep}{K^{\textup{sep}}}
\newcommand{\Ker}{{\operatorname{ker}}}
\newcommand{\Lsep}{L^{\textup{sep}}}
\newcommand{\Lift}{\operatorname{Lift}}
\newcommand{\LS}[2]{{\genfrac{(}{)}{}{}{#1}{#2}}} 
\newcommand{\vlim}{\operatornamewithlimits{\text{$v$}-lim}}
\newcommand{\wlim}{\operatornamewithlimits{\text{$w$}-lim}}
\newcommand{\MOD}[1]{~(\textup{mod}~#1)}
\newcommand{\Norm}{{\operatorname{\mathsf{N}}}}
\newcommand{\notdivide}{\nmid}
\newcommand{\normalsubgroup}{\triangleleft}
\newcommand{\odd}{{\operatorname{odd}}}
\newcommand{\onto}{\twoheadrightarrow}
\newcommand{\Orbit}{\mathcal{O}}
\newcommand{\ord}{\operatorname{ord}}
\newcommand{\Per}{\operatorname{Per}}
\newcommand{\PrePer}{\operatorname{PrePer}}
\newcommand{\PGL}{\operatorname{PGL}}
\newcommand{\Pic}{\operatorname{Pic}}
\newcommand{\Prob}{\operatorname{Prob}}
\newcommand{\Qbar}{{\bar{\QQ}}}
\newcommand{\rank}{\operatorname{rank}}
\newcommand{\Resultant}{\operatorname{Res}}
\renewcommand{\setminus}{\smallsetminus}
\newcommand{\Span}{\operatorname{Span}}
\newcommand{\tors}{{\textup{tors}}}
\newcommand{\Trace}{\operatorname{Trace}}
\newcommand{\twistedtimes}{\mathbin{%
   \mbox{$\vrule height 6pt depth0pt width.5pt\hspace{-2.2pt}\times$}}}
\newcommand{\UHP}{{\mathfrak{h}}}    
\newcommand{\Vdot}{V(\Adele_K)_{\bullet}}
\newcommand{\Wreath}{\operatorname{Wreath}}
\newcommand{\<}{\langle}
\renewcommand{\>}{\rangle}

\newcommand{\longhookrightarrow}{\lhook\joinrel\longrightarrow}
\newcommand{\longonto}{\relbar\joinrel\twoheadrightarrow}

\newcommand{\Spec}{\operatorname{Spec}}
\renewcommand{\div}{{\operatorname{div}}}

\newcounter{CaseCount}
\Alph{CaseCount}
\def\Case#1{\par\vspace{1\jot}\noindent
\stepcounter{CaseCount}
\framebox{Case \Alph{CaseCount}.\enspace#1}
\par\vspace{1\jot}\noindent\ignorespaces}

\title[\TITLERUNNING]{\TITLE}
\date{\DATE}

\author[Hsia, Silverman]{Liang-Chung Hsia and Joseph H. Silverman}
\address{Department of Mathematics,
         National Central University,
         Chung-Li, 32054 Taiwan, R. O. C.}
\email{hsia@math.ncu.edu.tw}
\address{Mathematics Department, 
         Box 1917,
         Brown University, 
         Providence, RI 02912 USA}
\email{jhs@math.brown.edu}
\subjclass{Primary: 11B37; Secondary:  11G99, 11R56, 14G99, 37F10}
\keywords{arithmetic dynamical systems, local-global principle,
Brauer--Manin obstruction}
\thanks{The first author's research supported under project 
  NSC 95-2115-M-008-002 of the National Science Council of Taiwan. \\
  The second author's research supported by NSA H98230-04-1-0064
and NSF DMS-0650017.}

\begin{abstract}
Let $\f:X\to X$ be a morphism of a variety defined over a number
field~$K$, let~$V\subset X$ be a $K$-subvariety, and
let~$\Ocal_\f(P)=\{\f^n(P):n\ge0\}$ be the orbit of a point~$P\in
X(K)$.  We describe a local-global principle for the
intersection~$V\cap\Ocal_\f(P)$.  This principle may be viewed as a
dynamical analog of the Brauer--Manin obstruction.  We show that the
rational points of~$V(K)$ are Brauer--Manin unobstructed for power
maps on~$\PP^2$ in two cases: (1)~$V$ is a translate of a torus.
(2)~$V$ is a line and~$P$ has a preperiodic coordinate.  A key tool in
the proofs is the classical Bang--Zsigmondy theorem on primitive
divisors in sequences.  We also prove analogous local-global results
for dynamical systems associated to endomoprhisms of abelian
varieties.
\end{abstract}


\maketitle

\section*{Introduction}

An important part of the field of arithmetic dynamics is the study of the
arithmetic properties of algebraic points under iteration of maps on
algebraic varieties. Many of the fundamental problems in this subject
are transpositions to a dynamical setting of classical results and
conjectures in the theory of Diophantine equations. A key tool in the
study of Diophantine equations is the application of local-global
principles, such as the Hasse principle and the Brauer--Manin
obstruction. In this paper we begin to develop a local-global theory
for arithmetic dynamics.
\par
Our starting point is a beautiful recent result of
Scharaschkin, who showed in his
thesis~\cite{scharaschkin} that the Brauer--Manin obstruction for
rational points on curves of genus at least~$2$ can be reformulated in
non-cohomological terms as a purely ad\`elic-geometric
statement. (See~\cite{volochlocalglobal} for an analysis of
Scharaschkin's criterion over function fields.)  A straightforward
translation of Scharaschkin's ideas into the dynamical setting
yields the following criterion.

Let~$K$ be a number field, let~$X/K$ be a projective variety, and let
\[
  \f:X\to X
\]
be a $K$-morphism of infinite order. Let~$\Adele_K$ denote the ring of
ad\`eles of~$K$, and for any point~$P\in X(K)$,
write~$\Closure\bigl(\Orbit_\f(P)\bigr)$ for the closure of the
orbit~$\Orbit_\f(P)$ of~$P$ in~$X(\Adele_K)$.  Let~$V$ be a subvariety
of~$X$ that contains no nontrivial $\f$-preperiodic subvarieties (as
defined in Section~\ref{section:defsnotation}).

\begin{definition}
\textup{[Dynamical Brauer--Manin Obstruction]}
\label{wanderingquestion}
With the above notation, we say that $V(K)$ is \emph{Brauer--Manin
unobstructed} (\emph{for~$\f$}) if for every point~$P\in X(K)$ we have
\[
  \Orbit_\f(P) \cap V(K) = \Closure\bigl(\Orbit_\f(P)\bigr) \cap V(\Adele_K).
\]
\end{definition}


In Section~\ref{section:transtori} (Theorem~\ref{thm:transtori}) we
show that~$V(K)$ is Brauer--Manin unobstructed for
the~$d^{\text{th}}$-power map on~$\PP^2$ and varieties~$V$ that are
translates of tori in~$\GG_m^2$.  We also give partial results in
Section~\ref{section:powerline} (Theorem~\ref{thm:Pprepercoord}) in
the case that~$V$ is an arbitrary line in~$\PP^2$.  In
Section~\ref{section:transabvar} (Theorem~\ref{thm:transav}) we show
that~$V(K)$ is Brauer--Manin unobstructed for the the
multiplication-by-$d$ map on an abelian variety when~$V$ is a
translate of a codimension~$1$ abelian subvariety of~$A$.  All of
these result rely on the existence of primitive divisors
(Bang--Zsigmondy type theorems).  Finally, in Section~\ref{section:av}
(Theorem~\ref{thm:abv}) we use results of Serre~\cite{MR0289513} and
Stoll~\cite{arxiv0606465v3} to study the dynamical Brauer--Manin
obstruction for multiplication maps and more general subvarieties of
abelian varieties.

\begin{remark}
Zhang~\cite[Remark~4.2.3]{zhang} has also studied algebraic dynamics
over the ad\`eles, although the questions that he raises have a somewhat
different flavor from those considered in this paper.
Let~$\f:\PP^N\to\PP^N$ be a morphism of degree~$d\ge2$ defined over a
number field~$K$ and let~$S$ be a finite set of places of~$K$. For
each~$v\in S$ there is a canonical probability measure~$\mu_v$
on~$\hat\PP^N(\CC_v)$ attached to~$\f$. (Here~$\hat\PP^N$ is~$\PP^N$
if~$v$ is archimedean and~$\hat\PP^N$ is Berkovich projective space
if~$v$ is nonarchimedean. In any case, the construction of~$\mu_v$ is
nontrivial.)  Let~$P_1,P_2,\ldots\in\PP^N(\Kbar)$ be a sequence of
algebraic points such that no infinite subsequence is contained in a
preperiodic subvariety of~$\PP^N$ and assume further
that~$\hhat_\f(P_i)\to0$ as~$i\to\infty$, where~$\hhat_\f$ is the
canonical height associated to~$\f$. Then Zhang conjectures that the
the set~$\{P_i\}_{i\ge1}$ is equidistributed in~$\prod_{v\in
S}\hat\PP^N(\CC_v)$ with respect to the product measure~$\prod_{v\in
S}\mu_v$.
\end{remark}

\begin{acknowledgement}
The authors would like to thank Felipe Voloch for making available a
preprint of his paper~\cite{volochlocalglobal} and Mike Rosen for his
helpful suggestions.  The second author would also like to thank Jing
Yu, Yen-Mei Chen, his coauthor, and the NCTS for their hospitality
during his visit when this work was initiated.
\end{acknowledgement}

\section{Definitions and Notation}
\label{section:defsnotation}
We set the following notation, which will remain fixed throughout
this paper.

\begin{notation}
\setlength{\itemsep}{3pt}
\item[$K$]
a number field.
\item[$M_K$]
the set of inequivalent places of $K$.
\item[$M_K^\infty$]
the set of archimedean places of $K$.
\item[$\gp_v$]
the prime ideal associated to a finite place~$v\in M_K$.
\item[$\Adele_K$]
the ring of ad\`eles of $K$.
\item[$X/K$]
a projective variety.
\item[$\f$]
a morphism $\f:X\to X$ defined over~$K$.
\item[$V$]
a subvariety of~$X$, defined over~$K$.
\item[$\Orbit_\f(P)$]
The (forward) orbit of a point $P\in X$ under iteration of~$\f$.
\item[$\Closure\bigl(\Orbit_\f(P)\bigr)$]
The ad\`elic closure of~$\Orbit_\f(P)$ in $X(\Adele_K)$.
\end{notation}

\begin{definition}
A subvariety~$W$ of~$X$ is said to be \emph{$\f$-preperiodic}
if there are integers~$n>m$ such that $\f^n(W)=\f^m(W)$. If
also~$\dim(W)\ge1$, we say that~$W$ is \emph{nontrivial}.
\end{definition}

It is clear that we have an inclusion
\[
  \Orbit_\f(P) \cap V(K) \subseteq
  \Closure\bigl(\Orbit_\f(P)\bigr) \cap V(\Adele_K),
\]
since~$\Orbit_\f(P)$ is contained in its closure and~$V(K)$ is
contained in~$V(\Adele_K)$. A point in the right-hand side is given by
local data, and the following Brauer--Manin property asks if this
local data is sufficient to characterize the set of global points.

\begin{definition}
\label{dynbrauermanin}
With notation as above, let~$V_\f^{\text{pp}}$ be the union of all
nontrivial~$\f$-preperiodic subvarieties of~$V$.  Then we say
that~$V(K)$ is \emph{Brauer--Manin unobstructed} (\emph{for~$\f$}) if
for every point~$P\in X(K)$ satisfying \text{$\Orbit_\f(P)\cap
V_\f^{\text{pp}}=\emptyset$} we have
\[
  \Orbit_\f(P) \cap V(K) = \Closure\bigl(\Orbit_\f(P)\bigr) \cap V(\Adele_K).
\]
\end{definition}

Since we are assuming that~$X$ and~$V$ are projective, they are
proper over~$K$, so their sets of ad\`elic points are simply the
products
\[
  X(\Adele_K) = \prod_{v\in M_K} X(K_v)
  \qquad\text{and}\qquad
  V(\Adele_K) = \prod_{v\in M_K} V(K_v).
\]
Thus for example, a point~$Q\in X(\Adele_K)$ has the
form~$Q=(Q_v)_{v\in M_K}$ with~$Q_v\in X(K_v)$. Then by definition of
the ad\`elic (in this case, product) topology, a point~$Q\in
X(\Adele_K)$ is in~$\Closure\bigl(\Orbit_\f(P)\bigr)$ but not
in~$\Orbit_\f(P)$ if and only if 
there is an infinite set of positive integers~$\Ncal_{P,Q}\subset\NN$
such that for every~$v\in M_K$,
\begin{equation}
  \label{eqn:QvvlimfnP}
  Q_v = \vlim_{\substack{n\in\Ncal_{P,Q}\\n\to\infty\\}} \f^n(P).
\end{equation}
(We write~$\vlim$ to indicate that the limit is being taken in the
$v$-adic topology.)  N.B.~The set of integers~$\Ncal_{P,Q}$ depends
on~$P$ and~$Q$, but it must be independent of~$v\in M_K$.

\begin{remark}
We explain why it is necessary to assume some sort of condition on the
nontrivial $\f$-preperiodic subvarieties of~$V$.  Suppose for example
that~$V$ contains a nontrivial $\f$-preperiodic subvariety~$W$ and
suppose further that~$W(K)$ contains a point~$Q$ with
infinite~$\f$-orbit. We will construct a point~$P\in X(K)$ with the
property that
\begin{equation}
  \label{eqn:orbpneclos}
  \Orbit_\f(P) \cap V(K) 
  \ne \Closure\bigl(\Orbit_\f(P)\bigr) \cap V(\Adele_K).
\end{equation}
\par
Our assumptions mean that~$\f^n(W)=\f^m(W)$ for some $n>m$ and that
there is a non-preperiodic point~$Q\in W(K)$.  Replacing~$\f$,~$W$,
and~$Q$ by~$\f^{n-m}$,~$\f^m(W)$, and~$\f^m(Q)$, respectively, we have
\[
  \f(W)=W\qquad\text{and}\qquad Q\in W(K).
\]
The variety~$W$ is projective, so~$W(\Adele_K)$ is compact.
The infinite set $\Orbit_\f(Q)$ is contained in~$W(\Adele_K)$, so
its ad\`elic closure contains at least one accumulation point~$R$.
If~$R$ is not in~$\Orbit_\f(Q)$, then
\[
  R\notin\Orbit_\f(Q) \cap V(K) 
  \qquad\text{and}\qquad
  R\in \Closure\bigl(\Orbit_\f(Q)\bigr) \cap V(\Adele_K),
\]
so we are done.
\par
We are reduced to the case that some point~$\f^k(Q)$ in the orbit
of~$Q$ is an ad\`elic accumulation point of the orbit. We set
$P=\f^{k+1}(Q)$. Note that~$\f^k(Q)\notin\Orbit_\f(P)$, since we are
assuming that~$Q$ is not $\f$-preperiodic. On the other hand,
the accumulation points of~$\Orbit_\f(P)$ and~$\Orbit_\f(Q)$
in~$X(\Adele_K)$ are the same, since the two sets differ by only
finitely many elements. Hence
\[
  \f^k(Q)\notin\Orbit_\f(P) \cap V(K) 
  \qquad\text{and}\qquad
  \f^k(Q)\in \Closure\bigl(\Orbit_\f(P)\bigr) \cap V(\Adele_K),
\]
so in all cases we have constructed an orbit
satisfying~\eqref{eqn:orbpneclos}.
\end{remark}

\begin{remark}
\label{remark:vcontfnc}
We will make frequent use of the following elementary observation.
Let~$f$ be a rational function on~$X$ and let~$Z(f)$ be the support of
the polar divisor of~$f$. Then for any~$v\in M_K$, the function~$f$ is
continuous on~$X(K_v)\setminus Z(f)$ with respect to the~$v$-adic
topology. In particular, if the $v$-adic closure of the set
\text{$\bigl\{\f^n(P):n\in\Ncal_{P,Q}\bigr\}$} in~$X(K_v)$ is disjoint
from~$Z(f)$, then~\eqref{eqn:QvvlimfnP} implies that
\[
  f(Q_v) = \vlim_{n\in\Ncal_{P,Q}} f\bigl(\f^n(P)\bigr).
\]
\end{remark}

\section{Power maps and translated tori}
\label{section:transtori}

In this section we consider the case that~$X=\PP^2$ and~$\f$ is a
power map and we show that the rational points on a translated torus
are Brauer--Manin unobstructed.

\begin{theorem}
\label{thm:transtori}
let
\[
  \f:\PP^2\longrightarrow\PP^2,\qquad
  \f\bigl([X,Y,Z]\bigr) = [X^d,Y^d,Z^d]
\]
be the~$d^{\text{th}}$-power map for some~$d\ge2$.  Let~$k,\ell\ge1$,
let~$A,B\in K$, and let~$V\subset\PP^2$ be the curve
\begin{equation}
  \label{VAXkBYl}
  V : AX^k = BY^\ell.
\end{equation}
Further let~$P\in\PP^2(K)$ be a point whose~$\f$-orbit~$\Orbit_\f(P)$
is infinite.
Then one of the following two statements is true\textup:
\begin{parts}
\item[\upshape(i)]
$\Orbit_\f(P) \cap V(K) 
       = \Closure\bigl(\Orbit_\f(P)\bigr) \cap V(\Adele_K)$.
\vspace{5pt}
\item[\upshape(ii)]
The variety~$V$ is preperiodic for~$\f$, and
there exists an~$i\ge0$ such that \text{$\Orbit_\f(P)\cap\f^i(V)$} is an
infinite set.
\end{parts}
\end{theorem}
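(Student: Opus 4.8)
The plan is to work with the affine torus $\GG_m^2 \subset \PP^2$ with coordinates $x = X/Z$, $y = Y/Z$, so that $\f$ becomes $(x,y)\mapsto(x^d,y^d)$, and to reduce to studying an adelic accumulation point $Q = (Q_v)_v$ of $\Orbit_\f(P)$ that lies on $V(\Adele_K)$ but not in $\Orbit_\f(P)$. If no such $Q$ exists we are trivially in case~(i), so assume one does. Write $P = [a,b,c]$; after the reduction we may assume $P \in \GG_m^2(K)$, i.e.\ none of the coordinates vanishes — the boundary cases (a coordinate of $P$ is zero) force $V$ to meet a coordinate line, which quickly lands us in case~(ii) since a coordinate line is $\f$-preperiodic, or else make the orbit land on the boundary in a controlled way. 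So set $P = (a,b) \in \GG_m^2(K)$, and suppose $Q_v = \vlim_{n\in\Ncal} (a^{d^n}, b^{d^n})$ simultaneously for all $v$, for some infinite $\Ncal \subset \NN$.

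Next I would exploit the defining equation of $V$, namely $A x^k = B y^\ell$ on the torus, by writing $\gamma = A a^k / (B b^\ell) \in K^*$. The adelic condition says that for each $v$ we have $A Q_{v,x}^k - B Q_{v,y}^\ell = 0$, so by continuity of the rational function $f = A x^k - B y^\ell$ away from its polar divisor (Remark~\ref{remark:vcontfnc}, applied once we check the orbit stays $v$-adically away from the coordinate lines — this requires a little care at the finitely many bad $v$), we get $\vlim_{n\in\Ncal}\bigl(A a^{k d^n} - B b^{\ell d^n}\bigr) = 0$, i.e.\ $\gamma^{d^n} \to 1$ $v$-adically along $n\in\Ncal$, for \emph{every} place $v$ at once. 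The heart of the argument is then: a fixed $\gamma \in K^*$ with $\gamma^{d^n}\to 1$ in all completions simultaneously (along a fixed infinite subsequence) must be a root of unity. This is where Bang--Zsigmondy enters: if $\gamma$ is not a root of unity, the sequence $\gamma^{d^n} - 1$ has a primitive prime divisor $\gp_n$ for all large $n$ (viewing $\gamma$ with appropriate care at its zeros and poles, or passing to the numerator of $\gamma^{d^n}-1$), and the existence of such a primitive divisor at a place $v = v_n$ \emph{depending on $n$} contradicts $v$-adic convergence along a fixed $\Ncal$ for that single $v_n$ — more precisely, one picks one such $n\in\Ncal$ with $\gp_n$ not dividing $\gamma^{d^m}-1$ for the smaller $m$'s, and checks the valuation $\ord_{v_n}(\gamma^{d^n}-1)$ cannot be made large while staying consistent. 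The clean statement to isolate and prove here is: \emph{if $\gamma\in K^*$ and there is an infinite set $\Ncal$ with $\ord_v(\gamma^{d^n}-1)\to\infty$ (or just $\gamma^{d^n}\to 1$) as $n\to\infty$ in $\Ncal$ for every $v\in M_K$, then $\gamma$ is a root of unity}; I expect this to be the main obstacle and the place where the hypotheses $d\ge 2$ and the Bang--Zsigmondy theorem are essential.

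Once $\gamma = A a^k/(B b^\ell)$ is known to be a root of unity, say $\gamma^N = 1$, I would conclude as follows. Then $A^N a^{kN} = B^N b^{\ell N}$, so the point $P$, or rather its image under a suitable iterate, satisfies a relation forcing $\Orbit_\f(P)$ to meet a translate $\f^i(V)$ infinitely often: concretely, $\f^n(P) = (a^{d^n}, b^{d^n})$ lies on the curve $A^N X^{kN} = B^N Y^{\ell N}$ — which is itself $\f$-invariant up to the power structure — whenever $d^n \equiv$ the right residue, and among such $n$ there are infinitely many lying in a single residue class mod the order of $d$, hence infinitely many $\f^n(P)$ lying on one fixed translate $\f^i(V)$. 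It remains to observe that $\gamma$ a root of unity also forces $V$ itself to be $\f$-preperiodic: the power map permutes the torsion translates of the subtorus $\{A x^k = B y^\ell\}$ among finitely many curves, so $\f^n(V) = \f^m(V)$ for some $n > m$. This establishes case~(ii). I would organize the writeup so that the dichotomy is: either no adelic accumulation point lies on $V$ outside the orbit (case~(i)), or such a point exists, forcing $\gamma$ to be a root of unity (via the Bang--Zsigmondy lemma), forcing in turn both the preperiodicity of $V$ and the infinitude of $\Orbit_\f(P)\cap\f^i(V)$ (case~(ii)).
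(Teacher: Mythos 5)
Your overall strategy — dehomogenize, use the defining equation of $V$ to extract an ad\`elic limit of the form $(\text{power of }P)^{d^n}$, invoke Bang--Zsigmondy — is the same as the paper's, but there is a real algebraic error at the crucial step. From $\vlim_{n\in\Ncal}\bigl(Aa^{kd^n}-Bb^{\ell d^n}\bigr)=0$ and the fact that $a,b$ are $v$-units away from $S$, what you obtain is
\[
  \vlim_{n\in\Ncal}\Bigl(\frac{a^k}{b^\ell}\Bigr)^{d^n} \;=\; \frac{B}{A},
\]
that is, a limit $\l^{d^n}\to\xi$ with $\l=a^k/b^\ell$ and a \emph{fixed target} $\xi=B/A$ that is in general \emph{not} $1$. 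Your quantity $\gamma=Aa^k/(Bb^\ell)$ does \emph{not} satisfy $\gamma^{d^n}\to 1$, because $\gamma^{d^n}=(A/B)^{d^n}\l^{d^n}$, whereas the limit forces $(A/B)\l^{d^n}\to 1$; the factor $A/B$ is not raised to the power $d^n$. Consequently your ``clean lemma'' ($\gamma^{d^n}\to 1$ in every completion $\Rightarrow\gamma$ a root of unity) is not directly applicable; what is actually needed is the stronger statement (Proposition~\ref{proposition:adelicpowermap} in the paper): if $\vlim_{n\in\Ncal}\l^{d^n}=\xi$ at every place outside a finite set, then \emph{both} $\l$ and $\xi$ are roots of unity \emph{and} $\xi=\l^{d^r}$ for some $r\ge 1$. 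The paper proves this in two stages, each using Bang--Zsigmondy: first, primitive primes with $f_{v_n}(\l)=d^n$ force $\xi\equiv 1$ modulo arbitrarily many distinct primes, hence $\xi=1$; then a second application with $\gcd(m,d)=1$ and $f_{w_m}(\l)=m$ combines with the limit to force $\l\equiv 1$ modulo arbitrarily many primes. Reducing to your ``$\to 1$'' lemma by a substitution such as $\mu=\l^{d-1}$ is possible, but you don't do it, and even then you would still need the extra conclusion $\xi=\l^{d^r}$.

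That last piece, $\xi=\l^{d^r}$, is exactly what lets one conclude case~(ii): it says $\f^r(P)\in V$ on the nose, and then periodicity of $\f^i(V)$ for suitable $i$ produces the infinite intersection. Your replacement — arguing from $\gamma^N=1$ that $\f^n(P)$ lies on the auxiliary curve $A^NX^{kN}=B^NY^{\ell N}$ — does not prove $\Orbit_\f(P)$ meets any single translate $\f^i(V)$ infinitely often, since that auxiliary curve properly contains $V$ together with other components, and membership in it does not control which component the points land on. Finally, the boundary cases $\a\b\g=0$ are dispatched too quickly: when $\g=0$ (and $\a\b\ne 0$) the argument in the paper produces a contradiction with the infinite-orbit hypothesis rather than landing in case~(ii), while when exactly one of $\a,\b$ vanishes one must deduce $A=0$ or $B=0$ from the ad\`elic condition before concluding $V$ is a fixed coordinate line.
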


A key tool in the proof of Theorem~\ref{thm:transtori} is the
following classical result on the distribution of the multiplicative
orders of an element of~$K^*$ when reduced modulo primes.

\begin{theorem}
\label{thm:bang}
\textup{(Bang, Zsigmondy)}
Let~$K$ be a number field, let~$\l\in K^*$ be an element that
is not a root of unity, and let
\[
  S_\l = M_K^\infty \cup \{v\in M_K : |\l|_v\ne 1\}.
\]
For each~$v\notin S_\l$, let~$f_v(\l)$ denote the order of~$\l$
in~$\FF_v^*$, the multiplicative group of residue field at~$v$. Then
the set
\[
  \NN \setminus \{f_v(\l) : v\notin S_\l\}
\]
is finite, i.e., all but finitely many positive integers occur as the
order modulo~$\gp$ of~$\l$ for some prime~$\gp$ of~$K$.
\end{theorem}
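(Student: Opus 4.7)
The plan is to show that for all but finitely many $n\in\NN$, the element $\l^n-1$ admits a \emph{primitive prime divisor}, i.e.\ a place $v\notin S_\l$ with $|\l^n-1|_v<1$ and $|\l^d-1|_v=1$ for every proper divisor $d\mid n$. If $v$ is such a place, then $f_v(\l)$ divides $n$ but not any proper divisor of $n$, forcing $f_v(\l)=n$; hence every sufficiently large $n$ occurs as some $f_v(\l)$, which is exactly the statement of the theorem.

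The main input is the cyclotomic factorization $\l^n-1=\prod_{d\mid n}\Phi_d(\l)$, where $\Phi_d(T)\in\ZZ[T]$ is the $d$-th cyclotomic polynomial. The first step is to show that if a place $v\notin S_\l$ satisfies $|\Phi_n(\l)|_v<1$ and $|\Phi_d(\l)|_v<1$ for some proper divisor $d\mid n$, then the residue characteristic $p$ of $v$ must divide $n/d$; this is the standard computation that the resultant $\Resultant(\Phi_n,\Phi_d)$ is a prime power dividing $n/d$. Consequently, the primes dividing $\Phi_n(\l)$ split into those lying above rational primes dividing $n$ (call these \emph{non-primitive}) and those giving $f_v(\l)=n$ (the \emph{primitive} ones), so it suffices to show $\Phi_n(\l)$ is not supported entirely on the non-primitive primes.

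The second step is a comparison via the product formula. Since $\Phi_n(T)=\prod_\z(T-\z)$ with $\z$ ranging over the primitive $n$-th roots of unity, standard height inequalities yield $h\bigl(\Phi_n(\l)\bigr)=\varphi(n)\,h(\l)+O(\log n)$, and Kronecker's theorem combined with the hypothesis that $\l$ is not a root of unity forces $h(\l)>0$. On the other hand, at each place $v$ lying above a rational prime $p\mid n$, a local computation based on the expansion of $\Phi_n$ at $n$-th roots of unity produces a bound of the form $v\bigl(\Phi_n(\l)\bigr)\le v(n)+O(1)$, which goes back to Birkhoff--Vandiver. Summing over the few ramified primes and using $\sum_{p\mid n}1=O(\log n)$, the total non-primitive contribution to $\log\Norm\bigl(\Phi_n(\l)\bigr)$ is only $O\bigl((\log n)^2\bigr)$, which is dominated by the main term $\varphi(n)\,h(\l)$ for all $n\gg 0$. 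Hence $\Phi_n(\l)$ must possess a primitive prime divisor for all sufficiently large $n$, giving the desired $v$ with $f_v(\l)=n$.

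The main obstacle is producing the uniform-in-$n$ local bound on $v\bigl(\Phi_n(\l)\bigr)$ at primes of wild ramification or where $\l$ is congruent to a root of unity modulo a high power of $\gp_v$. Controlling this requires a careful analysis of the filtration of $1$-units in the completion $K_v$ together with the logarithmic expansion of $\Phi_n$ near its roots; once those estimates are in place, the archimedean/non-archimedean comparison above closes the argument and leaves only finitely many exceptional values of $n$.
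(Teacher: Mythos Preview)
The paper does not actually prove this theorem. Its entire ``proof'' consists of citations: Bang, Zsigmondy, and Birkhoff--Vandiver for $K=\QQ$, and Postnikova--Schinzel and Schinzel for the extension to number fields. The result is quoted as a black box and used as a tool in the subsequent sections.

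Your outline is the classical strategy underlying those cited papers, so in that sense it is faithful to the literature the authors invoke. The reduction to primitive prime divisors of $\Phi_n(\l)$, the resultant computation isolating the non-primitive primes among the divisors of~$n$, and the height comparison forcing a leftover primitive factor are exactly the skeleton of the Birkhoff--Vandiver and Schinzel arguments.

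One point deserves more care than you give it. Your claimed asymptotic $h\bigl(\Phi_n(\l)\bigr)=\varphi(n)\,h(\l)+O(\log n)$ hides the genuinely delicate step: at an archimedean place~$v$ where $|\l|_v=1$, the factor $|\l-\z|_v$ can be very small for some primitive $n$-th root of unity~$\z$, and bounding $|\Phi_n(\l)|_v$ from below is not a ``standard height inequality'' but requires either an equidistribution argument or an effective Diophantine input of Baker type. This, rather than the $p$-adic bound you flag as the ``main obstacle,'' is where the number-field case (Postnikova--Schinzel, Schinzel) goes beyond the elementary rational case. Your non-archimedean bound $v\bigl(\Phi_n(\l)\bigr)\le v(n)+O(1)$ is, by contrast, fairly routine once the resultant identity is in hand. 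With the archimedean lower bound supplied, the rest of your sketch closes correctly.
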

\begin{proof}
This was originally proven by Bang~\cite{bang},
Zsigmondy~\cite{MR1546236}, and Birkhoff and Vandiver~\cite{MR1503541}
for~$K=\QQ$. It was extended to number fields by Postnikova and
Schinzel~\cite{MR0223330} and in strengthened form
by Schinzel~\cite{MR0344221}. 
\end{proof}

We use the Bang--Zsigmondy theorem to prove an ad\`elic property
of iterated power maps.

\begin{proposition}
\label{proposition:adelicpowermap}
Let $d\ge2$, 
let $\l,\xi\in K^*$, let~$S$ be a finite set of places of~$K$,
and let~$\Ncal$ be an infinite sequence of positive
integers. Suppose that for every~$v\notin S$ we have
\begin{equation}
\label{vlimoflx}
  \xi = \vlim_{\substack{n\in\Ncal\\n\to\infty\\}} \l^{d^n}.
\end{equation}
Then both~$\xi$ and~$\l$ are roots of unity, and there is
an integer~$r>0$ such that~$\xi=\l^{d^r}$.
\end{proposition}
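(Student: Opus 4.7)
The plan is to combine the given $v$-adic hypothesis with Bang--Zsigmondy (Theorem~\ref{thm:bang}). First, for every $v \notin S$, the $v$-adic convergence $\l^{d^n} \to \xi$ in $K^*$ forces $|\l|_v = 1$ (since otherwise $|\l|_v^{d^n}$ tends to $0$ or $\infty$), and hence $|\xi|_v = 1$ as well, so both $\l$ and $\xi$ are $S$-units. For a nonarchimedean place $v \notin S$, reducing modulo $\gp_v$ converts the hypothesis into the algebraic identity $\bar\l^{\,d^n} = \bar\xi$ in $\FF_v^*$ for every $n \in \Ncal$ past a ($v$-dependent) threshold. Taking orders yields
\[
  f_v(\xi) \;=\; f_v(\l)\,\big/\,\gcd\bigl(f_v(\l),\,d^{n}\bigr),
\]
which for $n$ sufficiently large stabilizes to $f_v(\l)/\gcd(f_v(\l),\,d^{\infty})$, the $d$-coprime part of $f_v(\l)$; in particular $\gcd(f_v(\xi),d)=1$ at every such $v$.

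The central step is to derive a contradiction from the assumption that $\l$ is not a root of unity. Under that assumption, Theorem~\ref{thm:bang} applied to $\l$ provides, for all but finitely many $N \in \NN$, a place $v \notin S \cup S_\l$ with $f_v(\l) = N$ (the finitely many primitive places that happen to lie in $S$ are discarded since $S$ is finite). I then split according to whether $\xi$ is a root of unity.

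If $\xi$ is not a root of unity, fix a prime $p \mid d$ and apply Theorem~\ref{thm:bang} to $\xi$: all but finitely many integers of the form $p^k$, $k \geq 1$, must occur as $f_v(\xi)$ for some $v \notin S_\xi$. But the identity above forces $\gcd(f_v(\xi), d) = 1$ at every $v \notin S$, so these infinitely many $p^k$ can occur only at places in the finite set $S \cup S_\xi$, yielding only finitely many possible values of $f_v(\xi)$---a contradiction. If instead $\xi$ is a root of unity of order $k_0$, then $f_v(\xi) \mid k_0$ (hence $f_v(\xi) \leq k_0$) for all but finitely many $v$; but by the previous paragraph we may choose $v \notin S \cup S_\l \cup S_\xi$ with $f_v(\l) = q$ a prime larger than $k_0$ and coprime to $d$, which gives $f_v(\xi) = q > k_0$, again a contradiction. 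Thus $\l$ is a root of unity.

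Once $\l$ is known to be a root of unity, the set $\{\l^{d^n} : n \in \Ncal\}$ is a finite subset of $K^*$. Fixing any single nonarchimedean $v \notin S$, distinct elements of this finite set have positive $v$-adic distance, so the $v$-adic convergence to $\xi$ forces the sequence to be eventually constant and equal to $\xi$; therefore $\xi = \l^{d^r}$ for some $r \in \Ncal$ (so $r \geq 1$), and $\xi$ is automatically a root of unity. The main obstacle I anticipate is the bookkeeping of the various ``eventually'' thresholds---translating the $v$-adic convergence uniformly into the exact order identity, and ensuring that the combined exceptional sets arising from the two applications of Theorem~\ref{thm:bang} (one to $\l$, one to $\xi$) remain finite so that the contradictions are genuine.
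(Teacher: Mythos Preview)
Your argument is correct, and it proceeds by a somewhat different route than the paper's. Both proofs hinge on Bang--Zsigmondy (Theorem~\ref{thm:bang}), but the organization differs. The paper assumes $\l$ is not a root of unity and first applies Theorem~\ref{thm:bang} to $\l$ with orders $N=d^n$ to pin down $\xi=1$ \emph{exactly} (by producing infinitely many primes $\gp$ with $\xi\equiv1\pmod{\gp}$), and then applies Theorem~\ref{thm:bang} to $\l$ a second time with orders coprime to $d$ to force $\l=1$, a contradiction. You instead extract the uniform order identity $f_v(\xi)=\text{($d$-coprime part of $f_v(\l)$)}$ for all $v\notin S$, and use it twice: once against Bang--Zsigmondy for $\xi$ (ruling out $\xi$ non-root-of-unity because $f_v(\xi)$ can never be a nontrivial power of a prime dividing $d$), and once against Bang--Zsigmondy for $\l$ (ruling out $\xi$ a root of unity of order $k_0$ by exhibiting $f_v(\l)=q$ prime, $q>k_0$, $\gcd(q,d)=1$, whence $f_v(\xi)=q>k_0$). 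Your order-identity lemma packages the interaction between the $v$-adic convergence and the residue-field arithmetic in a clean reusable form; the paper's argument is a bit more hands-on but yields the sharper intermediate conclusion $\xi=1$ rather than merely ``$\xi$ is a root of unity,'' which it then leverages directly.

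Two small points of bookkeeping you flagged are indeed fine: since $|\l|_v=|\xi|_v=1$ for $v\notin S$, one has $S_\l,S_\xi\subset S\cup M_K^\infty$, and discarding at most the finitely many integers in $\{f_v(\l):v\in S\setminus S_\l\}$ (resp.\ $\{f_v(\xi):v\in S\setminus S_\xi\}$) upgrades Theorem~\ref{thm:bang} to produce witnesses outside $S$ as well. Also, your stabilization claim for $\gcd(f_v(\l),d^n)$ is justified because any two indices $n_1<n_2$ in $\Ncal$ past the $v$-threshold give $\bar\l^{\,d^{n_1}}=\bar\l^{\,d^{n_2}}$, forcing the $d$-part of $f_v(\l)$ to divide $d^{n_1}$.
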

\begin{proof}
If $\l$ is a root of unity, then the set $\{\l^{d^n}\}_{n\ge1}$ is
finite and consists entirely of roots of unity. Hence
the set~$\{\l^{d^n}\}_{n\in\Ncal}$ is a discrete subset of~$K_v$
(for any~$v$), so the existence of the limit~\eqref{vlimoflx} implies
that~$\xi$ is one of the roots of unity in this set.
\par
We assume henceforth that~$\l$ is not a root of unity and derive a
contradiction.  Without loss of generality, we may adjoin finitely
many places to~$S$, so we may assume that~$S$ contains all archimedean
places and that
\[
  |\l|_v = |\xi|_v = 1
  \quad\text{for all $v\notin S$.}
\]
\par
Theorem~\ref{thm:bang} tells us that that all but finitely many integers
appear as the order of~$\l$ modulo~$\gp_v$ for some place~$v\notin S$.
Hence after discarding finitely many elements from the set~$\Ncal$, we
find for every~$n\in\Ncal$ there is a distinct place~$v_n\notin S$
such that
\begin{equation}
  \label{eqn:fvndn}
  f_{v_n}(\l) = d^n.
\end{equation}
For notational convenience, we write~$\gp_n$ for the prime ideal 
associated to~$v_n$. Then~\eqref{eqn:fvndn} implies that
\[
  \l^{d^n} = \l^{f_{v_n}(\l)} \equiv 1 \pmod{\gp_n}.
\]
We also observe that if~$N\ge n$, then
\[
  \l^{d^N} = \left(\l^{d^n}\right)^{d^{N-n}}\equiv 1\pmod{\gp_n}.
\]
Hence if we choose any list of distinct
elements~$n_1,n_2,\ldots,n_t\in\Ncal$, then
\begin{equation}
  \label{equat2-1}
  \l^{d^N} \equiv 1 \pmod{\gp_{n_1}\gp_{n_2}\cdots\gp_{n_t}}
  \qquad\text{for all $N\ge\max\{n_1,n_2,\ldots,n_t\}$.}
\end{equation}
\par
We now use the assumption~\eqref{vlimoflx} that
\[
  \xi = \vlim_{n\in\Ncal} \l^{d^n}
  \qquad\text{for all $v\notin S$.}
\]
This says that~$\l^{d^n}$ is $v$-adically close to~$\xi$
for all large~$n\in\Ncal$, so in particular
\begin{align}
\label{equat2-2}
  \l^{d^n} \equiv \xi \pmod{\gp_i} 
  & \qquad \text{for all sufficiently large $n \in \Ncal$}\\[-1\jot]
  & \qquad \text{and all $1 \le i \le t$.} \notag
\end{align}
Combining \eqref{equat2-1} and \eqref{equat2-2} yields
\[
  \xi \equiv 1 \pmod{\gp_{n_1}\gp_{n_2}\cdots\gp_{n_t}}.
\]
But $t$ is arbitrary and the primes $\gp_{n_i}$ are distinct,
so $\xi=1$.
\par
Now \eqref{vlimoflx} becomes
\begin{equation}
  \label{2prime}
  \vlim_{n\in \Ncal} \l^{d^n} = 1 
  \qquad\text{for all $v\not\in S$,}
\end{equation}
and we want to derive a contradiction to the assumption that~$\l$ is
not a root of unity.  
\par
Applying Theorem~\ref{thm:bang} again, we find an infinite set of
positive integers~$\Mcal$ such that for 
\[
  \gcd(m,d)=1\quad\text{for every $m\in\Mcal$,}
\]
and such that for every~$m\in\Mcal$ there is a place~$w_m\notin S$
satisfying
\[
  f_{w_m}(\l) = m.
\]
Thus
\begin{equation}
  \label{eqn:lmfwm}
  \l^m = \l^{f_{w_m}(\l)} \equiv 1\pmod{\gp_m}.
\end{equation}
\par
On the other hand, the limit~\eqref{2prime} tells us that
for any~$m\in\Mcal$ there is a some~$n(m)$ such that
\begin{equation}
  \label{eqn:ldnm1}
  \l^{d^{n(m)}} \equiv 1 \pmod{\gp_m}.
\end{equation}
Combining~\eqref{eqn:lmfwm} and~\eqref{eqn:ldnm1} and using
the assumption that~$\gcd(m,d)=1$ for all~$m\in\Mcal$, we conclude
\[
  \l \equiv 1\pmod{\gp_m}
  \qquad\text{for all $m\in\Mcal$.}
\]
Since~$\Mcal$ is an infinite set and the~$\gp_m$ are distinct prime
ideals, this implies that~$\l=1$, contradicting the assumption
that~$\l$ is not a root of unity.
\end{proof}

We now have the tools needed to prove the main theorem of this
section.

\begin{proof}[Proof of Theorem~\ref{thm:transtori}]
We suppose that there is a point
\begin{equation}
  \label{eqn:QCOfPVAK}
  Q \in \Closure\bigl(\Orbit_\f(P)\bigr) \cap V(\Adele_K)
  \quad\text{with}\quad
  Q\notin\Orbit_\f(P)
\end{equation}
and will prove under this assumption that~$V$ is preperiodic
for~$\f$ and that~\text{$\Orbit_\f(P)\cap V$} is an infinite set.
As noted earlier, the point~$Q$ has the form~$Q=(Q_v)_{v\in
M_K}$, and the assumption
that~\text{$Q\in\Closure\bigl(\Orbit_\f(P)\bigr)\setminus\Orbit_\f(P)$}
means that there is an infinite set of positive
integers~$\Ncal_{P,Q}\subset\NN$ such that for every~$v\in M_K$,
\[
  Q_v = \vlim_{\substack{n\in\Ncal_{P,Q}\\n\to\infty\\}} \f^n(P).
\]
To ease notation, we will leave off the~$n\to\infty$ and the
dependence on~$P$ and~$Q$ and write simply~$\vlim_{n\in\Ncal}$ to mean
the $v$-adic limit as~$n\to\infty$ with~$n\in\Ncal_{P,Q}$.
\par
Let~$P=[\a,\b,\g]$.  We consider first the case that~$\a\b\g\ne0$, so
we may dehomogenize by  setting~$\g=1$.
Let~$S$ be the set
\[
   S =  M_K^\infty \cup \{v\in M_K : |\a|_v\ne1\}
         \cup \{v\in M_K : |\b|_v\ne1\},
\]
so in particular,~$\a$ and~$\b$ are both~$S$-units.  (In the notation of
Theorem~\ref{thm:bang}, we have~\text{$S=S_\a\cup S_\b$}.)
\par
The fact that
\[
  Q_v = \vlim_{n\in\Ncal} \f^n(P) 
    = \vlim_{n\in\Ncal} \bigl[\a^{d^n},\b^{d^n},1\bigr]
\]
implies that for all~$v\notin S$, the point~$Q_v$ has the form
\[
  Q_v = [x_v,y_v,1]\quad\text{with}\quad
  |x_v|_v = |y_v|_v = 1,
\]
and further the sequences~$\{\a^{d^n}\}_{n\in\Ncal}$
and~$\{\b^{d^n}\}_{n\in\Ncal}$ converge~$v$-adically in~$K_v$ with
\[
  x_v = \vlim_{n\in\Ncal} \a^{d^n} \quad\text{and}\quad y_v 
    = \vlim_{n\in\Ncal} \b^{d^n}.
\]
\par
Let~$f(x,y)=Ax^k-By^\ell$. We view~$f$ as a rational function
on~$\PP^2$ and observe that~$f(x,y)=0$ is an affine equation for~$V$.
Since the quantities~$\a$,~$\b$,~$x_v$, and~$y_v$ are all~$v$-adic
units (remember that are assuming that~$v\notin S$), it follows
that the $v$-adic closure of the set
\[
  \bigl\{\f^n(P) : n\in\Ncal\bigr\}
\]
is disjoint from the polar divisor of~$f$. Also by assumption we
have~$Q_v\in V(K_v)$, so~$f(Q_v)=0$. Hence applying
Remark~\ref{remark:vcontfnc}, we find that
\begin{align*}
  0 &= \vlim_{n\in\Ncal} f\bigl(\f^n(P)\bigr) \\
    &=  A \cdot \bigl(\vlim_{n\in\Ncal} \a^{d^n}\bigr)^k 
      - B\cdot \bigl(\vlim_{n\in\Ncal} \b^{d^n}\bigr)^\ell.
\end{align*}
It follows that~$A$ and~$B$ are nonzero, since~$\a$ and~$\b$
are~$v$-units, so a little bit of algebra yields
\begin{equation}
  \label{vlimakbldnBA}
  \vlim_{n\in\Ncal} \left(\frac{\a^k}{\b^\ell}\right)^{d^n} = \frac{B}{A}.
\end{equation}
\par
Now Proposition~\ref{proposition:adelicpowermap} tells us that~$B/A$
and~$\a^k/\b^\ell$ are roots of unity and that there is an integer~$r\ge1$
such that
\begin{equation}
  \label{eqn:akbldrBA}
  \left(\frac{\a^k}{\b^\ell}\right)^{d^r} = \frac{B}{A}.
\end{equation}
\par
The fact that~$B/A$ is a root of unity implies that~$V$ is
preperiodic for~$\f$.  To see this, we write~$V_{[A,B]}$ in
order to indicate the dependence of~$V$ on the
parameter~$[A,B]\in\PP^1$. It is clear from the definition of~$\f$
and~$V$ that $\f\bigl(V_{[A,B]}\bigr)= V_{[A^d,B^d]}$, and hence
\[
  \f^n\bigl(V_{[A,B]}\bigr)= V_{[A^{d^n},B^{d^n}]}.
\]
It thus suffices to find~$n>m$ satisfying
$[A^{d^n},B^{d^n}]=[A^{d^m},B^{d^m}]$, which can be done since~$B/A$ is
a root of unity. Hence~$V$ is preperiodic for~$\f$.
\par
Let~$i\ge0$ be an integer so that~$\f^i(V)$ is periodic for~$\f$, say
with period~$q$.  The formula~\eqref{eqn:akbldrBA} says
that~$\f^r(P)\in V$, so we find that $\f^{i+r+qj} (P)\in \f^i(V)$ for
all~$j\ge0$.  This proves that~$\Orbit_\f(P)\cap\f^i(V)$ is an
infinite set, which
completes the proof of Theorem~\ref{thm:transtori} under the
assumption that~$\a\b\g\ne0$.
\par
It remains to deal with the case~$\a\b\g=0$, where we recall that~$P$
is the point \text{$P=[\a,\b,\g]\in\PP^2$}. Suppose first
that~$\g=0$. The assumption that~$\Orbit_\f(P)$ is infinite implies
that~$\a\b\ne0$, so we can dehomogenize and write \text{$P=[\a,1,0]$}
with~$\a\ne0$. Then just as in the case~$\g=1$, we find that
\[
  Q_v = \vlim_{n\in\Ncal}\bigl[  \a^{d^n}, 1, 0 \bigr]
  \qquad\text{for all $v\notin S$,}
\]
and the fact that~$Q_v\in V$ tells us that
\[
  \vlim_{n\in\Ncal} \a^{d^n} = B/A.
\]
Applying Proposition~\ref{proposition:adelicpowermap} again, we
conclude that~$\a$ and~$B/A$ are roots of unity. But this implies the point
  $P = [\a, 1, 0]$ is preperiodic for $\f$, contradicting to our
  assumption that $P$ has infinite orbit.
\par
Next suppose that~$\g\ne0$ and~$\b=0$. We dehomogenize~$P$ in the
form \text{$P=[\a,0,1]$}, and then
\[
  Q_v = \vlim_{n\in\Ncal}\bigl[ \a^{d^n}, 0, 1 \bigr].
\]
Taking any~$v$ with~$|\a|_v=1$, the fact that~$Q_v=[x_v,0,1]\in V$
implies that~$Ax_v^k=B\cdot0^\ell=0$, so~$A=0$.  (Note that~$x_v\ne0$
since $|x_v|_v=1$.)  Thus~$V$ is the line~$Y=0$, i.e., it is given by
the equation~$BY^\ell=0$, so it is fixed by~$\f$ and the entire 
orbit~$\Orbit_\f(P)$ lies on~$V$.
\par
Finally, if~$\a=0$, the same argument shows that~$B=0$, so~$V$ is the
line~$X=0$, hence is fixed by~$\f$ and $\Orbit_\f(P)\subset V$.
\end{proof}

\section{Power maps and linear varieties}
\label{section:powerline}
In this section we again take~$\f$ to be the a power map
\[
  \f:\PP^2\longrightarrow\PP^2,\qquad
  \f\bigl([X,Y,Z]\bigr) = [X^d,Y^d,Z^d]
\]
and consider a line~$V\subset\PP^2$ given by an equation
\begin{equation}
  \label{VAXBYCZ}
  V : AX + BY + CZ = 0.
\end{equation}
Let
\begin{equation}
  \label{Pabg}
  P = [\a,\b,\g]
\end{equation}
be the given point with infinite~$\f$-orbit.  
\par
Note that if~$ABC=0$, then Theorem~\ref{thm:transtori} says
that~$V(K)$ is Brauer--Manin unobstructed.  We now prove analogous
results for~$ABC\ne0$ when~$P$ has various special forms.

\begin{theorem}
\label{thm:Pprepercoord}
Let~$V$ and~$P$ be as given in~\eqref{VAXBYCZ} and~\eqref{Pabg} and
assume that~$ABC\ne0$. Further assume that one of the following is
true\textup:
\begin{parts}
\Part{(a)}
One of the ratios~$\a/\b$,~$\b/\g$, or~$\a/\g$ is a root of unity.
\Part{(b)}
One of the coordinates~$\a$,~$\b$, or~$\g$ is zero.
\end{parts}
Then
\[
  \Orbit_\f(P) \cap V(K) = \Closure\bigl(\Orbit_\f(P)\bigr) \cap V(\Adele_K).
\]
\textup(It is easy to check that the assumption that~$ABC\ne0$ implies
that~$V$ cannot be preperiodic for~$\f$.\textup)
\end{theorem}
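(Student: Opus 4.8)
The plan is to argue by contradiction, paralleling the proof of Theorem~\ref{thm:transtori}. Suppose there is a point
\[
  Q = (Q_v)_{v\in M_K} \in \Closure\bigl(\Orbit_\f(P)\bigr) \cap V(\Adele_K)
  \quad\text{with}\quad Q \notin \Orbit_\f(P),
\]
so that by~\eqref{eqn:QvvlimfnP} there is an infinite set $\Ncal\subset\NN$ with $Q_v = \vlim_{n\in\Ncal}\f^n(P)$ for every $v\in M_K$; I write $\vlim_{n\in\Ncal}$ for this $v$-adic limit. Because $V$ is cut out by a single equation, any point of $\Orbit_\f(P)\cap V(\Adele_K)$ automatically lies in $V(K)$, so the two sides of the claimed identity can differ only through such a~$Q$, and it suffices to rule out the existence of such a $Q$ under each of (a) and (b). In particular I shall not need the fact that $V$ is not preperiodic: every contradiction will come from $\Orbit_\f(P)$ being infinite or from $ABC\ne0$.

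Consider hypothesis (b). Permuting the coordinates $[X,Y,Z]$ and $A,B,C$ accordingly --- which commutes with $\f$ --- I may assume $\g=0$; then $\a\b\ne0$, since $\Orbit_\f(P)$ is infinite, so $P=[\l,1,0]$ with $\l=\a/\b\in K^*$, and $\l$ is not a root of unity, as otherwise $P$ would be $\f$-preperiodic. Put $S=M_K^\infty\cup\{v\in M_K:|\l|_v\ne1\}$. For $v\notin S$ the iterates $\f^n(P)=[\l^{d^n},1,0]$ have $v$-unit coordinates, so $Q_v=[x_v,1,0]$ with $|x_v|_v=1$ and $x_v=\vlim_{n\in\Ncal}\l^{d^n}$; substituting into the equation $AX+BY+CZ=0$ and using $Q_v\in V(K_v)$ gives $x_v=-B/A$ for every $v\notin S$, an element of $K^*$ because $AB\ne0$. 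Proposition~\ref{proposition:adelicpowermap}, applied with this $\l$ and $\xi=-B/A$, then forces $\l$ to be a root of unity, a contradiction.

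Now hypothesis (a). If some coordinate of $P$ vanishes then (b) holds and we are done by the previous paragraph, so assume $\a\b\g\ne0$, and after permuting coordinates assume $\z:=\a/\b$ is a root of unity. Let $S$ consist of $M_K^\infty$ together with the finitely many places at which one of $\a,\b,\g$ is not a unit. For $v\notin S$, writing $\f^n(P)=[(\a/\g)^{d^n},(\b/\g)^{d^n},1]$, the limit has the form $Q_v=[u_v,w_v,1]$ with $|u_v|_v=|w_v|_v=1$, $u_v=\vlim_{n\in\Ncal}(\a/\g)^{d^n}$, and $w_v=\vlim_{n\in\Ncal}(\b/\g)^{d^n}$; dividing gives $\vlim_{n\in\Ncal}\z^{d^n}=u_v/w_v$. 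This last sequence converges $v$-adically and takes values in the finite --- hence $v$-adically discrete --- group of roots of unity generated by $\z$, so it is eventually equal to a single root of unity $\eta\in K$, independent of $v$, and $u_v=\eta w_v$. Now $Q_v\in V(K_v)$ reads $(A\eta+B)w_v+C=0$ for all $v\notin S$. If $A\eta+B=0$ this forces $C=0$, contradicting $ABC\ne0$; otherwise $w_v=-C/(A\eta+B)$ is a fixed element of $K^*$ for every $v\notin S$, so Proposition~\ref{proposition:adelicpowermap} applied to $\b/\g$ shows $\b/\g$ is a root of unity --- but then $\a/\b$, $\b/\g$, and hence $\a/\g=(\a/\b)(\b/\g)$, are all roots of unity, making $P$ $\f$-preperiodic, contrary to hypothesis.

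The routine verifications --- that the displayed sequences of coordinate vectors converge in the affine charts claimed, that $v$-adic limits of $v$-adic units are units, and that the iterates stay off the relevant polar divisors so that the continuity of Remark~\ref{remark:vcontfnc} applies --- go through exactly as in Section~\ref{section:transtori}. The one genuinely new step, which I would state carefully, is the reduction in case (a): since $\z=\a/\b$ is a root of unity, the sequence $\z^{d^n}$ visits only finitely many values, so the $v$-adic convergence forced by the existence of $Q_v$ makes it \emph{eventually constant}, with an eventual value $\eta$ that does not depend on $v$; it is this $v$-independence that collapses the two-variable problem to the single-variable input of Proposition~\ref{proposition:adelicpowermap}. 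I expect this bookkeeping, rather than any deeper difficulty, to be the main obstacle.
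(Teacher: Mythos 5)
Your proof is correct and follows essentially the same approach as the paper: handle case~(b) by a direct application of Proposition~\ref{proposition:adelicpowermap}, and in case~(a) use the root-of-unity hypothesis to reduce the two-variable constraint to a single application of Proposition~\ref{proposition:adelicpowermap}, obtaining the contradiction that $P$ would be preperiodic. The only cosmetic difference is that the paper dehomogenizes so that the preperiodic ratio becomes a single root-of-unity coordinate and passes to a subsequence on which that coordinate's iterates are literally constant, whereas you stay in projective coordinates and achieve the same reduction by dividing the two coordinate sequences and observing that $\zeta^{d^n}$ is eventually constant with a $v$-independent value $\eta$.
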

\begin{proof}
(a) It suffices to consider the case that~$\b/\g$ is a root of unity.
Dividing the equation of~$V$ by~$-CZ$ and the coordinates of~$P$ by~$-\g$,
without loss of generality we can use affine coordinates of the form
\[
  V:Ax+By=1\qquad\text{and}\qquad P=(\a,\b).
\]
Our assumptions are that~$AB\ne0$ and~$\b$ is a root of unity.
Further, since~$P$ has infinite~$\f$-orbit, it follows that~$\a\ne0$
and~$\a$ is not a root of unity.
\par
We assume that there is a point
\[
  Q \in \Closure\bigl(\Orbit_\f(P)\bigr) \cap V(\Adele_K)
  \quad\text{with}\quad Q\notin\Orbit_\f(P)
\]
and derive a contradiction. (We will show that this forces~$\a$ to be
a root of unity.)  As in the proof of Theorem~\ref{thm:transtori}, we
let~$\Ncal=\Ncal_{P,Q}$ be an infinite set of integers so that
\[
  Q_v = (x_v,y_v)  = \vlim_{n\in\Ncal}\f^n(P)  = \vlim_{n\in\Ncal} 
    \bigl(\a^{d^n},\b^{d^n}\bigr)
  \quad\text{for all $v\in M_K \setminus S_\a$.}
\]
\par
The assumption that~$\b$ is a root of unity implies that~$\b^{d^n}$ takes
on only finitely many distinct values, so replacing~$\Ncal$ with a subsequence,
we may assume that~$\b^{d^n}=\b_0$ is constant for all~$n\in\Ncal$. Thus
\[
  x_v = \vlim_{n\in\Ncal}\a^{d^n}
  \quad\text{and}\quad
  y_v = \b_0
  \quad\text{for all $v\in M_K \setminus  S_\a$.} 
\]
Note that~$\b_0$ is a root of unity.
\par
The fact that~$Q_v\in V$ tells us that for all~$v$ with~$|\a|_v=1$.
\[
  \vlim_{n\in\Ncal}\a^{d^n} 
  = x_v = \frac{1-B\b_0}{A}
  \quad\text{for all $v\in M_K \setminus  S_\a$.}
\]
It follows from Proposition~\ref{proposition:adelicpowermap}
that~$\a$ is a root of unity, contradicting our assumption that~$P$
has infinite~$\f$-orbit.
\par\noindent(b)\enspace
By symmetry, we may assume that~$\g=0$.  Then the fact that~$P$ is not
preperiodic implies that~$\a\b\ne0$ and~$\a/\b$ is not a root of unity.
Dehomogenizing~$P$ with respect to the~$Y$-coordinate, we can
write~$P$ in the form~$P=[\a,1,0]$ with~$\a$ not a root of unity.
Suppose now that 
\[
  \Orbit_\f(P) \cap V(K) 
  \ne \Closure\bigl(\Orbit_\f(P)\bigr) \cap V(\Adele_K).
\]
Then as in the proofs of Theorem~\ref{thm:transtori} and part~(a) of
this proposition, there is a finite set of places~$S$ so that
\[
  A \vlim_{n\in\Ncal} \a^{d^n} + B\cdot 1 = C\cdot 0
  \qquad\text{for all $v\notin S_\a$.}
\]
We have~$AB\ne0$ by assumption, so
Proposition~\ref{proposition:adelicpowermap} tells us
that~$\a$ is a root of unity, contradicting our assumption
that~$P$ has infinite~$\f$-orbit.
\end{proof}

\section{Abelian Varieties and Translated Abelian Subvarieties}
\label{section:transabvar}

In this section we prove an analog of Theorem~\ref{thm:transtori} for
abelian varieties. The key tool will be the following elliptic analog
of the Bang--Zsigmondy theorem (Theorem~\ref{thm:bang}).

\begin{theorem}
\label{thm:ellipticbz}
Let~$E/K$ be an elliptic curve defined over a number field~$K$,
let~$P\in E(K)$ be a nontorsion point, and let~$S\subset M_K$ be a
finite set of places including~$M_K^\infty$ and all place of bad
reduction for~$E$.  For each place~$v\notin S$, let~$f_v(P)$ be the
order of~$P\pmod{\gp_v}$ in~$E(\FF_{\gp_v})$. Then the set
\[
  \NN\setminus\bigl\{f_v(P) : v\notin S\bigr\}
\]
is finite.
\end{theorem}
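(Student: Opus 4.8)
The plan is to recast the statement as one about primitive divisors of the elliptic divisibility sequence attached to $P$, and then to run a counting argument of Zsigmondy type, exactly parallel to the classical proof of Theorem~\ref{thm:bang}. First I would fix a Weierstrass equation for $E$ whose coefficients are integral away from $S$ and recall, from the theory of the formal group $\hat{E}$, that for $v\notin S$ a multiple $mP$ reduces to the identity of $E(\FF_{\gp_v})$ precisely when $f_v(P)\mid m$. Writing $d_{n,v}\ge0$ for the depth of $nP$ in $\hat{E}$ at $v$ (the valuation at $nP$ of the formal parameter $-x/y$, with $d_{n,v}=0$ unless $nP$ reduces to the identity), I would set $\Dcal_n=\prod_{v\notin S}\gp_v^{\,d_{n,v}}$, so that $\gp_v\mid\Dcal_n$ if and only if $f_v(P)\mid n$. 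Under this dictionary the statement ``$f_v(P)=n$ for some $v\notin S$'' becomes ``$\Dcal_n$ has a \emph{primitive} prime divisor'', i.e.\ a prime dividing $\Dcal_n$ but no $\Dcal_m$ with $m$ a proper divisor of $n$; and the theorem is equivalent to the assertion that $\Dcal_n$ has a primitive divisor for all but finitely many $n$.

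The argument then rests on two size estimates. Since $\log\Norm(\Dcal_n)$ differs from the canonical height $\hhat(nP)=n^2\hhat(P)$ only by the local heights of $nP$ at the finitely many places of $S$ and by the (negligible) contribution of the numerator of $x(nP)$, one expects
\[
  \log\Norm(\Dcal_n)=c\,n^2+o(n^2)
\]
for a constant $c=c(E,P)>0$, namely a fixed positive multiple of $\hhat(P)$, which is nonzero because $P$ is nontorsion. For the non-primitive part I would use the formal-group fact that multiplication by $k$ scales depth by $\ord_v(k)$: if $m=f_v(P)\mid n$ then $d_{n,v}=d_{m,v}+\ord_v(n/m)$, and summing over the non-primitive places $v$ gives
\[
  \log\Norm\bigl(\text{non-primitive part of }\Dcal_n\bigr)\le\sum_{\substack{\ell\mid n\\ \ell\ \mathrm{prime}}}\log\Norm(\Dcal_{n/\ell})+[K:\QQ]\log n .
\]
If $\Dcal_n$ had no primitive divisor at all, then combining these two estimates with the trivial bound $\log\Norm(\Dcal_m)\le c\,m^2+O(1)$ would force $c\,n^2+o(n^2)\le c\,n^2\sum_{\ell\ \mathrm{prime}}\ell^{-2}+o(n^2)$, which is impossible for $n$ large since $\sum_{\ell\ \mathrm{prime}}\ell^{-2}=0.4522\ldots<1$ and $c>0$. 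Hence $\Dcal_n$ has a primitive divisor for all $n\gg0$, which is the claim.

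The main obstacle is the bookkeeping in these estimates rather than any new idea. The lower bound $\log\Norm(\Dcal_n)\ge c\,n^2-o(n^2)$ requires knowing that the local heights $\hat\lambda_v(nP)$ for $v\in S$ grow only like $o(n^2)$ uniformly in $n$ (for the archimedean places this is the standard input from lower bounds for elliptic logarithms, and is essentially why one cannot cheaply do better than ``all but finitely many''), and the depth-scaling identity $d_{n,v}=d_{m,v}+\ord_v(n/m)$ must be checked, and corrected by hand, at the finitely many places whose residue characteristic is small compared with their ramification index, as well as at the places of bad reduction. Since the existence of primitive divisors of $\Dcal_n$ for $n\gg0$ is precisely the content of the known theory of elliptic divisibility sequences, one could alternatively simply invoke that result in place of carrying out the estimates above.
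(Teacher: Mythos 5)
The paper does not actually supply a proof here; it cites Silverman's Wieferich/abc paper (\cite{MR961918}, for $K=\QQ$) and Cheon--Hahn (\cite{MR1683630}, for general number fields), remarking only that the argument requires a strong form of Siegel's theorem. Your outline is a correct reconstruction of the standard primitive-divisor proof that those references give: the translation to elliptic divisibility sequences, the asymptotic $\log\Norm(\Dcal_n)=n^2\hhat(P)\cdot c+o(n^2)$ (whose lower bound is exactly the Siegel-type input the paper's remark flags), the depth-scaling bound for the non-primitive part, and the $\sum_{\ell}\ell^{-2}<1$ counting; so this is essentially the same approach as the paper's cited sources, and your closing observation that one may simply invoke the known primitive-divisor theorem is in effect what the paper itself does.
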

\begin{proof}
See~\cite{MR961918} for the case~$K=\QQ$ and~\cite{MR1683630} for
general number fields.  We note that the assertion of
Theorem~\ref{thm:ellipticbz} is not an elementary fact, since its
proof requires a strong form of Siegel's
theorem~\cite[IX.3.1]{MR1329092} on integral points on elliptic
curves.
\end{proof}

\begin{theorem}
\label{thm:transav}
Let~$K/\QQ$ be a number field, let~$A/K$ be an abelian variety, and
let~$B/K$ be an abelian subvariety of~$A$ of codimension~$1$.  We fix
a point~$T\in A(K)$ and let $V = B + T$ be the translation of $B$
by~$T$.
\par
Let $d \ge 2$ be an integer and consider the multiplication-by-$d$
map
\[
  [d]:A\to A. 
\]
Let $P \in A(K)$ be a nontorsion point.  Then one of the following two
statements is true\textup:
\begin{parts}
\item[\upshape(i)]
$\Orbit_d(P) \cap V(K) 
       = \Closure\bigl(\Orbit_d(P)\bigr) \cap V(\Adele_K)$.
\vspace{5pt}
\item[\upshape(ii)]
The variety~$V$ is~$[d]$-preperiodic.
\end{parts}
Further, in case~\textup{(ii)}, the point~$T$ has finite order in the
quotient variety~$A/B$.
\end{theorem}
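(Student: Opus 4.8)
The plan is to mimic the structure of the proof of Theorem~\ref{thm:transtori}, replacing the multiplicative group~$\GG_m$ by the abelian variety~$A$ and the Bang--Zsigmondy theorem by its elliptic analog (Theorem~\ref{thm:ellipticbz}). The first step is a reduction: by quotienting~$A$ by~$B$ we obtain a projection~$\pi:A\to A/B$ onto a variety of dimension~$1$; since~$B$ has codimension~$1$, the quotient~$A/B$ is either an elliptic curve or~$\PP^1$ (in fact, being an abelian variety of dimension~$1$, it is an elliptic curve~$E$). The translate~$V=B+T$ is exactly the fiber~$\pi^{-1}\bigl(\pi(T)\bigr)$, so a point~$Q\in A$ lies on~$V$ if and only if~$\pi(Q)=\pi(T)$. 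Thus the whole problem pushes forward under~$\pi$: setting~$\bar P=\pi(P)\in E(K)$ and~$\bar T=\pi(T)\in E(K)$, and using that~$\pi$ is a morphism (hence continuous~$v$-adically and commuting with ad\`elic closures), the hypothesis that there is a point~$Q\in\Closure(\Orbit_d(P))\cap V(\Adele_K)$ with~$Q\notin\Orbit_d(P)$ produces an infinite set~$\Ncal$ with~$\vlim_{n\in\Ncal}[d^n]\bar P=\bar T$ in~$E(K_v)$ for every~$v$ (the place-independence of~$\Ncal$ being the crucial point, exactly as in Definition~\ref{dynbrauermanin}). The goal is then to show this forces~$\bar P$ to be torsion in~$E(K)$, which is precisely the assertion that~$T$ has finite order in~$A/B$, i.e.\ case~(ii); and conversely to check that if~$\bar P$ is torsion then~$V$ is~$[d]$-preperiodic, which is easy since the fibers~$\pi^{-1}([d^n]\bar T)$ then cycle through finitely many translates of~$B$.

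The analytic heart is the elliptic analog of Proposition~\ref{proposition:adelicpowermap}: if~$\bar P\in E(K)$ is nontorsion and~$\vlim_{n\in\Ncal}[d^n]\bar P=\bar T$ for all~$v$ outside a finite set~$S$ (enlarged to include~$M_K^\infty$ and the places of bad reduction for~$E$, and the places where~$\bar P$ or~$\bar T$ is non-integral), then we derive a contradiction. First one shows~$\bar T$ is the identity~$O$: by Theorem~\ref{thm:ellipticbz}, after discarding finitely many~$n\in\Ncal$ one finds for each~$n$ a distinct place~$v_n\notin S$ with~$f_{v_n}(\bar P)=d^n$, so~$[d^n]\bar P\equiv O\pmod{\gp_{v_n}}$, and hence~$[d^N]\bar P\equiv O\pmod{\gp_{v_{n_1}}\cdots\gp_{v_{n_t}}}$ for all~$N$ large; combining with~$[d^n]\bar P\equiv\bar T\pmod{\gp_{v_{n_i}}}$ for~$n$ large gives~$\bar T\equiv O$ modulo a product of arbitrarily many distinct primes, whence~$\bar T=O$ (using that a nonzero point of~$E(K)$ is nonzero modulo all but finitely many primes, which follows from the fact that the subgroup generated by~$\bar T$ injects into the reduction for all but finitely many~$v$ of good reduction). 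Second, knowing~$\vlim_{n\in\Ncal}[d^n]\bar P=O$ for all~$v\notin S$, one applies Theorem~\ref{thm:ellipticbz} once more to extract an infinite set~$\Mcal$ of integers with~$\gcd(m,d)=1$ and places~$w_m\notin S$ with~$f_{w_m}(\bar P)=m$; then~$[m]\bar P\equiv O\pmod{\gp_{w_m}}$ while~$[d^{n(m)}]\bar P\equiv O\pmod{\gp_{w_m}}$ for suitable~$n(m)$, and since~$\gcd(m,d^{n(m)})=1$ this forces~$\bar P\equiv O\pmod{\gp_{w_m}}$ for all~$m\in\Mcal$, hence~$\bar P=O$, contradicting that~$\bar P$ is nontorsion.

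The main obstacle I anticipate is not the combinatorial argument above, which transcribes the proof of Proposition~\ref{proposition:adelicpowermap} almost verbatim, but rather two points of geometry and topology that need to be handled cleanly. The first is verifying that the quotient~$A/B$ really behaves well: one must check that~$A/B$ is an abelian variety of dimension~$1$ (standard), that~$\pi$ is defined over~$K$ and is a flat proper morphism so that~$V=\pi^{-1}(\pi(T))$ genuinely is~$B+T$ scheme-theoretically, and most importantly that~$\pi$ commutes with forming ad\`elic closures --- i.e.\ that~$\pi\bigl(\Closure(\Orbit_d(P))\bigr)\subseteq\Closure(\Orbit_d(\bar P))$ --- which follows from continuity of~$\pi$ on~$A(K_v)$ together with properness (compactness of~$A(K_v)$), but should be stated carefully. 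The second subtlety is the integrality bookkeeping: in the elliptic setting ``$|\bar P|_v=1$'' is replaced by ``$\bar P$ reduces to a point of good, non-identity-component behavior'' and one works with the formal group / reduction map~$E(K_v)\to E(\FF_{\gp_v})$; one must ensure the finite set~$S$ is chosen so that for~$v\notin S$ the reduction is a well-defined homomorphism and the limit relation~$\vlim[d^n]\bar P=\bar T$ transfers to the congruence~$[d^n]\bar P\equiv\bar T\pmod{\gp_v}$. Once these foundational points are in place, the argument is a routine dynamical adaptation, and the final clause about~$T$ having finite order in~$A/B$ is then immediate, being literally the statement that~$\bar P=\pi(P)$ --- or rather~$\bar T=\pi(T)$, and tracking which of~$\bar P,\bar T$ is forced torsion --- is torsion in~$E(K)=(A/B)(K)$.
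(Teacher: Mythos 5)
Your overall framework matches the paper's exactly: pass to the one-dimensional quotient elliptic curve $E=A/B$, observe that $V=\pi^{-1}(\bar T)$ so the hypothesis pushes forward to $\vlim_{n\in\Ncal}[d^n]\bar P=\bar T$ in every $E(K_v)$, and then apply the elliptic Bang--Zsigmondy theorem. The geometric/topological bookkeeping you flag (properness, continuity of $\pi$, good reduction) is indeed what needs checking and is straightforward. Where you diverge from the paper is in what you do with Bang--Zsigmondy in the nontorsion case. The paper applies it once, concludes $\bar T=\bar O$, hence $T\in B$, hence $V=B$ is fixed by $[d]$, and stops --- this is case~(ii), with no contradiction sought or needed. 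You instead apply Bang--Zsigmondy a second time (mimicking the full two-stage proof of Proposition~\ref{proposition:adelicpowermap}, with an auxiliary set $\Mcal$ of integers coprime to $d$) to conclude $\bar P=\bar O$, and declare a contradiction. Your second step is in fact correct and proves something slightly stronger than the theorem (namely that $\bar P$ nontorsion forces case~(i) outright), but it is unnecessary: the theorem is a disjunction, and the paper simply lands in case~(ii) the moment $\bar T=\bar O$ is established.

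There is, however, one genuine conceptual gap in your write-up: you repeatedly conflate ``$\bar P$ is torsion'' with ``$T$ has finite order in $A/B$.'' These are different assertions about different points. Conclusion~(ii) of the theorem --- $V$ is $[d]$-preperiodic, equivalently $[d^n]V=B+[d^n]T$ takes finitely many values --- is precisely the statement that $\bar T$ is torsion, and says nothing directly about $\bar P$. Your main argument establishes $\bar P$ torsion; to get from there to $\bar T$ torsion you must invoke the limit relation $\vlim_{n\in\Ncal}[d^n]\bar P=\bar T$ together with the finiteness of $\{[d^n]\bar P\}$, as the paper does explicitly in its second case. You gesture at this (``the fibers $\pi^{-1}([d^n]\bar T)$ then cycle'' implicitly uses $\bar T$ torsion, and your closing remark about ``tracking which of $\bar P,\bar T$ is forced torsion'' signals awareness of the issue) but never state the bridging step. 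Without it your deduction that case~(ii) holds is incomplete. The fix is short and the rest of your plan is sound.
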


\begin{proof}
Suppose that~(i) is false, and let
\[
  (Q_v)_{v\in M_K} 
  \in \Closure\bigl(\Orbit_\f(P\bigr)) \cap  V(\Adele_K)
  \quad\text{with}\quad
  (Q_v)_{v\in M_K}  \notin \Orbit_\f(P)\cap V(K).
\]
As in the proof of Theorem~\ref{thm:transtori}, this means that there
is an infinite set of positive integers $\Ncal \subset \NN$ such that
for every $v\in M_K$,
\[
  Q_v 
  =  \vlim_{\substack{n\in\Ncal \\n\to\infty\\}} [d^n](P) 
  \in V(K_v).
\]
\par
We now pass to the quotient abelian variety $E = A/B$.  Since $B$ has
codimension~$1$ in $A$, it follows that $E$ is an abelian variety of
dimension~$1$, i.e., $E$ is an elliptic curve.  For convenience, we
use bars to denote the image of points of~$A$ in~$E$.
Note that~$Q_v\in V = B+T$, so on the quotient variety we see that
\[
  \bar{Q}_v = \Tbar \in E
\]
is independent of~$v$. Hence for every $v\in M_K$ we have
\begin{equation}
  \label{eqn:vlimnndnPbar}
  \vlim_{\substack{n\in\Ncal \\ n\to\infty\\}} [d^n](\Pbar)
  = \Tbar
  \quad\text{in the $v$-adic topology,}
\end{equation}
so in particular, for every~$v\in M_K$ there is an~$N_v$ so that
\begin{equation}
  \label{eqn:dnPbTbgpv}
  [d^n](\Pbar) \equiv \Tbar \pmod{\gp_v}
  \qquad\text{for all $n\in\Ncal$ with $n\ge N_v$.}
\end{equation}
\par
We consider two cases. First, suppose that~$\Pbar\in E(K)$ is a
nontorsion point. Then we can apply the elliptic Zsgmondy theorem
(Theorem~\ref{thm:ellipticbz}) to~$\Pbar\in E(K)$.
This tells us that for all but finitely many~$n\in\Ncal$ 
there is a place~$v_n\in M_K$ such that
\begin{equation}
  \label{eqn:fvnPbardn}
  f_{v_n}(\Pbar) = d^n,
\end{equation}
i.e., the point~$\Pbar\bmod\gp_{v_n}$ has order~$d^n$ 
in~$E(\FF_{\gp_{v_n}})$. Hence if~$m\ge n$, then
\begin{equation}
  \label{eqn:dmPOpvn}
  [d^m](\Pbar)
  = [d^{m-n}][d^n](\Pbar)
  = [d^{m-n}][f_{v_n}](\Pbar) 
  \equiv \bar{O} \pmod{\gp_{v_n}}.
\end{equation}
\par
Now choose distinct integers~$n_1,n_2,\dots,n_t\in\Ncal$ so
that~\eqref{eqn:fvnPbardn} is true and, to ease notation,
let~~$v_1,v_2,\dots,v_t$ be the associated valuations.
Then for any integer~$m\in\Ncal$ satisfying
\[
  m \ge \max\{N_{v_1},N_{v_2},\dots,N_{v_t},n_1,n_2,\ldots,n_t\}
\]
we have for every~$1\le i\le t$,
\begin{align*}
  [d^m](\Pbar) &\equiv \Tbar \pmod{\gp_{v_i}}
    &&\text{from \eqref{eqn:dnPbTbgpv},} \\
  [d^m](\Pbar) &\equiv \bar{O} \pmod{\gp_{v_i}}
    &&\text{from \eqref{eqn:dmPOpvn}.}
\end{align*}
Hence
\[
  \Tbar \equiv \bar{O}\pmod{\gp_{v_i}}
  \qquad\text{for all $1\le i\le t$.}
\]
Since~$t$ is arbitrary, we conclude that~$\Tbar=\bar{O}$.
Thus~$T\in B$, which implies that~$V=B+T=B$, since~$B$ is an
abelian subvariety of~$A$. In particular,~$V$ is preperiodic for~$[d]$,
since in fact~$[d](V)=V$.
\par
Next we suppose that~$\Pbar$ is a torsion point of $E(K)$.
Then the set of points~$\bigl\{[d^n](\Pbar) : n\in\Ncal\bigr\}$ is finite,
so the existence of the limit~\eqref{eqn:vlimnndnPbar} implies
that~$\Tbar$ is one of the torsion points appearing in this set.
Hence
\[
  [d^n](V) = [d^n](B+T) = B+ [d^n](T)
\]
takes on only finitely many values, so~$V$ is preperiodic for~$[d]$.
\end{proof}

\section{Abelian Varieties and General Subvarieties}
\label{section:av}

In this section we prove a local-global result for dynamical systems
on abelian varieties that is related to the Brauer--Manin
obstruction. Roughly speaking, we assume that~$V(K)$ is Brauer--Manin
unobstructed in~$A(K)$ (as described in Theorem~\ref{thm:abv}~(ii)
below) and we prove that orbits of multiplication maps are
Brauer--Manin unobstructed. We set the following notation

\begin{notation}
\item[$K/\QQ$]
a number field.
\item[$A/K$]
an abelian variety.
\item[$V/K$]
a subvariety of $A/K$.
\item[$A(K_v)^0$]
identity component of~$A(K_v)$ for $v\in M_K^\infty$,
$0$ for~$v\in M_K^0$.
\item[$\Adot$]
${}=\prod_{v\in M_K} A(K_v)/A(K_v)^0$.
\item[$\Vdot$]
  the image of $V(\Adele_K)$ under the projection $A(\Adele_K) \to
  \Adot .$
\end{notation}


\begin{theorem}
\label{thm:abv}
We make the following assumptions\textup:
\begin{itemize}
\item[\textup(i)]
$V$ does not contain a translate of a positive-dimensional abelian
subvariety of~$A$.
\item[\textup(ii)]
$V(K)=\Vdot\cap\Closure\bigl(A(K)\bigr)$, where the closure and
the equality take place in~$\Adot$.
\end{itemize}
Then for all integers~$d\ge2$ and all nontorsion points~$P\in A(K)$ we
have
\[
  V(K)\cap \Orbit_d(P) = \Vdot\cap\Closure\bigl(\Orbit_d(P)\bigr).
\]
\textup(Here $\Orbit_d$ denotes the orbit under 
the multiplication-by-$d$ map on~$A$.\textup) 
\end{theorem}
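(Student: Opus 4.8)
The plan is to combine hypothesis~(ii) with Faltings' theorem and a description of the orbit's ad\`elic closure as a procyclic object. Throughout, identify $A(K)$ with its image in $\Adot$; this is injective because $A(K_v)^0=0$ for finite~$v$ and $A(K)\hookrightarrow A(K_v)$. The inclusion $\Orbit_d(P)\cap V(K)\subseteq\Vdot\cap\Closure(\Orbit_d(P))$ is then immediate. For the reverse inclusion, note that $\Orbit_d(P)\subseteq A(K)$, so $\Closure(\Orbit_d(P))\subseteq\Closure(A(K))$, and therefore
\[
  \Vdot\cap\Closure(\Orbit_d(P))
  \;=\;\bigl(\Vdot\cap\Closure(A(K))\bigr)\cap\Closure(\Orbit_d(P))
  \;=\;V(K)\cap\Closure(\Orbit_d(P))
\]
by~(ii). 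Hence it suffices to prove $V(K)\cap\Closure(\Orbit_d(P))=V(K)\cap\Orbit_d(P)$; that is, no rational point of~$V$ outside the orbit may be an ad\`elic limit of orbit points.

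Next I would use hypothesis~(i). Since $V$ contains no translate of a positive-dimensional abelian subvariety of~$A$, Faltings' theorem on rational points of subvarieties of abelian varieties (the Mordell--Lang statement for the finitely generated group $A(K)$) shows that $V(K)$ is finite. This finiteness is genuinely needed: if, say, $V$ contained a positive-dimensional abelian subvariety $B'$ of~$A$ meeting~$\Orbit_d(P)$, then $[d]B'=B'$ would force the whole tail of the orbit onto~$V$, while the orbit would also accumulate in~$\Vdot$ at non-rational points, so the desired equality would fail. We are thus reduced to checking, for each of the finitely many $Q\in V(K)$ with $Q\notin\Orbit_d(P)$, that $Q\notin\Closure(\Orbit_d(P))$.

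For this I would exploit that $A(K_v)$ is a compact $v$-adic analytic group, hence profinite; so $\Adot$ is profinite and $G:=\Closure\bigl(\{[n]P:n\in\ZZ\}\bigr)$ is a procyclic subgroup topologically generated by the image of~$P$. Choosing a continuous surjection $\hat\ZZ\onto G$ sending $1$ to the image of~$P$, the orbit corresponds to the image of $\{d^n:n\ge0\}\subset\hat\ZZ$, whose closure in $\hat\ZZ$ consists of the $d^n$ themselves together with limit elements whose component at every prime $\ell\mid d$ vanishes. Consequently, if $Q\in A(K)$ has $\bar Q\in\Closure(\Orbit_d(P))$ and $\bar Q\ne\overline{[d^n]P}$ for all~$n$, then $R_n:=Q-[d^n]P$ is a nonzero rational point for every~$n$ in a suitable infinite set~$\Ncal$, while $\overline{R_n}\to\bar O$ in~$\Adot$. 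The cleanest contradiction is local: by Chebotarev one can often choose a place $v_0$ of good reduction with $v_0\nmid d$ at which $[d]$ acts bijectively on $A(\FF_{\gp_{v_0}})$; then $[d^n]P$ is eventually congruent to~$Q$ modulo~$\gp_{v_0}$, the $R_n$ eventually lie in the formal group of~$A$ at~$v_0$, and since $[d]$ is an automorphism of that formal group the $R_n$ cannot tend to~$O$ --- a contradiction.

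The hard part will be this last step in the degenerate cases --- for instance when $A(K)$ contains nontrivial $d$-power torsion, so that $[d]$ fails to be bijective on $A(\FF_{\gp_v})$ for every~$v$ and no place $v_0$ as above exists. Disposing of these requires pinning down $G\cap A(K)$ precisely (it should equal $\{[n]P:n\in\ZZ\}$, up to torsion) and then intersecting the limit set of the orbit with $A(K)$; this is exactly where the results of Serre~\cite{MR0289513} on the closure of a finitely generated subgroup of~$A(K)$ inside $\prod_v A(K_v)$, and of Stoll~\cite{arxiv0606465v3} on $\Closure(A(K))$ in~$\Adot$, are brought to bear. I expect the bookkeeping needed to exclude accidental ad\`elic accumulation of orbits to be the principal source of difficulty.
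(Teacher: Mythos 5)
Your high-level plan is close to the paper's, but the crucial closing step is not carried out correctly and there is a misplaced appeal to hypothesis~(i). Two issues:

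First, the paper never needs Faltings or the finiteness of $V(K)$: hypothesis~(i) is not actually used in the proof. What \emph{is} needed is the fact that the rational point $Q$ must lie in the cyclic group $H=\ZZ P$, i.e., $Q=mP$ for some \emph{integer} $m$. The paper deduces this from Corollary~\ref{corXKH} applied to $H=\ZZ P$, which in turn rests on Theorem~\ref{thm:serrestoll}(c) (injectivity of $\Ahat(K)\to\prod_{v\in S}A(\FF_v)$). You never establish $Q\in\ZZ P$; your reduction only shows $Q\in V(K)$. Without knowing $Q=mP$, the statement that $Q$ ``corresponds to'' an element of $\hat\ZZ$ with vanishing $\ell$-components for $\ell\mid d$ does not yield a contradiction, because that element of $\hat\ZZ$ need not be an integer. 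Once you do know $m\in\ZZ$, the argument closes immediately and elementarily: $d^{r_i}\to m$ in $\hat\ZZ$, so $m=0$ at every $p\mid d$, yet $|m|_p=1$ at every $p\nmid d$ --- a contradiction, and no local/Chebotarev argument is required.

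Second, your proposed local argument is flawed independently. The points $R_n=Q-[d^n]P$ do not form a $[d]$-orbit (one has $R_{n+1}=Q-[d]([d^n]P)$, not $[d]R_n$), so even if $[d]$ acts as an automorphism of the formal group at a good place $v_0\nmid d$, this gives no obstruction to $R_n\to O$ in $A(K_{v_0})$. The ``degenerate cases'' you worry about are symptomatic of this: the difficulty is not the existence of $d$-power torsion, but that the formal-group observation does not apply to the sequence $R_n$ at all. The paper sidesteps all of this by proving Corollary~\ref{corXKH} first (so $Q\in H$) and then working purely in $\hat\ZZ\cong\Closure(H)$ --- the isomorphism $\Closure(H)\cong H\otimes\hat\ZZ$ from Theorem~\ref{thm:serrestoll}(a,b) together with the injectivity of $A(K)\hookrightarrow\Ahat(K)$ (no nontrivial divisible elements) is what you gestured at with the map $\hat\ZZ\onto G$, but you would need this to be an isomorphism and you would need $Q$ to live in the dense copy of $\ZZ$ inside it.
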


The proof of Theorem~\ref{thm:abv} uses the following results
of Serre and Stoll. 


\begin{theorem}
\label{thm:serrestoll}
Let
\[
  \Ahat(K)=A(K)\otimes\hat{\ZZ}
\]
be the profinite completion of $A(K)$, and let~$S$ be a set of
places of~$K$ of density~$1$.
\begin{parts}
\Part{(a)}
The map
\[
  \Ahat(K) \longrightarrow \prod_{v\in S} A(K_v)/A(K_v)^0
\]
is injective.
\Part{(b)}
The natural map~$\Ahat(K)\to\Adot$ induces an isomorphism
between~$\Ahat$ and~$\Closure\bigl(A(K)\bigr)$, the topological
closure of~$A(K)$ in~$\Adot$.
\Part{(c)}
Assume further that~$S\subset M_K^0$ and that~$A$ has good reduction
at every place in~$S$. Then the composition of the natural maps
\[
  \Ahat(K) \longrightarrow \prod_{v\in S} A(K_v)
  \longrightarrow \prod_{v\in S} A(\FF_v)
\]
is injective.
\end{parts}
\end{theorem}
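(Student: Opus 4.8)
The plan is to deduce all three parts from standard facts about the structure of $A(K_v)$ for a number field $K$, together with a density/Chebotarev argument for the injectivity statements. The crucial structural input is that for $v \in M_K^0$ a place of good reduction, reduction gives a short exact sequence
\[
  0 \longrightarrow A_1(K_v) \longrightarrow A(K_v) \longrightarrow A(\FF_v) \longrightarrow 0,
\]
where $A_1(K_v)$ is the kernel of reduction, a pro-$p$ group (for the residue characteristic $p$) that is isomorphic as a $\ZZ_p$-module to $\go_v^{\dim A}$; in particular $A_1(K_v)$ is uniquely divisible away from $p$ and torsion-free. For $v$ archimedean, $A(K_v)/A(K_v)^0$ is a finite $2$-group (it is $(\ZZ/2\ZZ)^{\dim A}$ for $v$ real and trivial for $v$ complex). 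So in every case $A(K_v)/A(K_v)^0$ is a finite group, and for $v \in M_K^0$ of good reduction it is exactly $A(\FF_v)$ (since then $A(K_v)^0 = A(K_v)$, as $A_1(K_v)$ surjects onto nothing—more precisely one checks $A(K_v)/A(K_v)^0$ is defined to be $A(\FF_v)$ in the nonarchimedean case via the notation set up above). This reduces everything to statements about the profinite group $\Ahat(K)$.

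For part~(a), I would argue as follows. By the Mordell--Weil theorem $A(K) \cong \ZZ^r \oplus T$ with $T$ finite, so $\Ahat(K) \cong \hat\ZZ^r \oplus T$. An element of the kernel of the map to $\prod_{v\in S} A(K_v)/A(K_v)^0$ is an element $x \in \Ahat(K)$ whose image in $A(\FF_v)$ (for the good-reduction places in $S$, which still form a density-$1$ set after discarding the finitely many bad ones) is trivial for all such $v$. To show such an $x$ vanishes, pick any $N \ge 2$ and look at $x \bmod N \Ahat(K)$, an element of $A(K)/NA(K)$. The condition that the image of $x$ in $A(\FF_v)$ dies for a density-$1$ set of $v$ forces, via the Chebotarev density theorem applied to the extension $K(A[N])/K$ (so that the Frobenius classes realize all of $\Gal(K(A[N])/K) \hookrightarrow \GL(A[N])$), that $x \bmod N$ maps to $0$ in $H^1$/the relevant Kummer-theoretic image—equivalently $x \in NA(K) \otimes \hat\ZZ$ after completion. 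Since this holds for every $N$, and $\bigcap_N N\hat\ZZ^r = 0$, we get $x = 0$. This is essentially Serre's argument from~\cite{MR0289513}; the one point requiring care is handling the torsion subgroup $T$, which is dealt with by taking $N$ divisible by $\#T$.

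Part~(b) is then almost formal: the image of $A(K)$ in $\Adot$ is, by definition of the product topology, dense in the subgroup generated by its coordinatewise images, and the closure is obtained by completing with respect to the finite-index subgroups; the natural continuous homomorphism $\Ahat(K) \to \Adot$ factors through $\Closure(A(K))$ and is surjective onto it by density, while injectivity is exactly part~(a) (applied with $S = M_K$, which certainly has density $1$). One must also check that $\Ahat(K) \to \Adot$ is a topological embedding, i.e.\ that the subspace topology on the image agrees with the profinite topology on $\Ahat(K)$; this follows because each $A(K)/NA(K)$ is detected by finitely many local factors (again by part~(a), a finite subset of $S$ suffices to separate the finitely many elements of $A(K)/NA(K)$). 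Part~(c) is the same argument as part~(a) but keeping the full local groups rather than their component groups, using that $A_1(K_v)$ is pro-(residue characteristic) so contributes nothing to the image in $A(\FF_v)$ and nothing to $A(K)/NA(K)$ for $N$ coprime to that characteristic; since we only need injectivity of the composite into $\prod A(\FF_v)$, the kernel-of-reduction part is harmless, and one invokes~(a).

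The main obstacle is the Chebotarev step in part~(a): one needs to know that a global point (or a class in $A(K)/NA(K)$) that reduces into a proper subgroup for a density-$1$ set of primes must already be trivial, and this is genuinely the content of Serre's theorem rather than a formality—it requires knowing the Galois image on torsion is large enough for Chebotarev to see everything, which is where the hypothesis "density $1$" (as opposed to merely "infinite") is used. Everything else is bookkeeping with the well-understood local structure of $A(K_v)$.
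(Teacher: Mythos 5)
The paper's ``proof'' is a pure citation to Serre \cite{MR0289513} and Stoll \cite{arxiv0606465v3}; you instead attempt to reconstruct the argument, which is a reasonable thing to do, but your sketch has two real problems. The first is notational but has consequences: the paper defines $A(K_v)^0 = \{0\}$ for nonarchimedean $v$, so $A(K_v)/A(K_v)^0 = A(K_v)$ there, not $A(\FF_v)$. You instead take $A(K_v)^0$ to be the kernel of reduction, and thereby conflate the targets in parts (a) and (c). With the correct reading, (c) is strictly \emph{stronger} than (a)---for $v$ of good reduction $A(\FF_v)$ is a quotient of $A(K_v)$, so injectivity into $\prod A(\FF_v)$ implies injectivity into $\prod A(K_v)$---and your deduction of (c) from (a) therefore runs backwards. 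One must prove (c) directly, and (a) follows from it.

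The second and more serious problem is that the Chebotarev step---passing from ``$x \bmod N$ dies in $A(\FF_v)/NA(\FF_v)$ for a density-one set of $v$'' to ``$x \in NA(K)$''---is precisely the content of Serre's theorem, and you flag it as such but do not actually carry it out. Chebotarev applied to $K(A[N])/K$ controls only the restriction of the Kummer class of $x$ to $\operatorname{Hom}\bigl(G_{K(A[N])}, A[N]\bigr)$; it says nothing about the inflation piece living in $H^1\bigl(\Gal(K(A[N])/K), A[N]\bigr)$, which need not vanish for a general abelian variety. Handling that kernel (e.g.\ by exhibiting homotheties in the Galois image, or by the finer structural analysis in Stoll's Proposition~3.7 and Theorem~3.10) is where the substance of the proof lies, and it is absent from your sketch. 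Your observation that (b) is essentially formal given (a) is correct.
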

\begin{proof}
(a) and (b) are part of~\cite[Theorem~3]{MR0289513}, and we
refer to~\cite[Proposition~3.7]{arxiv0606465v3} for the proof.
The statement in~(c) is part of~\cite[Theorem~3.10]{arxiv0606465v3}.
\end{proof}

\begin{corollary}
\label{corXKH}
Let~$H$ be a subgroup of~$A(K)$ and let~$\Closure(H)$ be the closure
of~$H$ inside~$\Adot$. Then under the assumption
\begin{equation}
  \label{eqn:VKVdCAK}
  V(K)=\Vdot\cap\Closure\bigl(A(K)\bigr)
\end{equation}
from Theorem~$\ref{thm:abv}$, we have
\[
  V(K)\cap H = \Vdot \cap \Closure(H).
\]
\end{corollary}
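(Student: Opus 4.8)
The plan is to reduce the statement for a general subgroup $H\subseteq A(K)$ to the hypothesis~\eqref{eqn:VKVdCAK}, which is the case $H=A(K)$, by intersecting with $\Closure(H)$ and using that $\Closure(H)$ is a closed subgroup of $\Closure\bigl(A(K)\bigr)$. The inclusion $V(K)\cap H\subseteq\Vdot\cap\Closure(H)$ is immediate, since $V(K)\subseteq\Vdot$ and $H\subseteq\Closure(H)$. For the reverse inclusion, I would start from a point $x\in\Vdot\cap\Closure(H)$. Because $\Closure(H)\subseteq\Closure\bigl(A(K)\bigr)$, such an $x$ lies in $\Vdot\cap\Closure\bigl(A(K)\bigr)$, which by~\eqref{eqn:VKVdCAK} equals $V(K)$. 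Hence $x\in V(K)\subseteq A(K)$. The remaining task is to show $x\in H$.

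The key point is then: $x$ is a genuine $K$-rational point of $A$ that lies in the closure of the subgroup $H$ inside $\Adot$; I must deduce $x\in H$. Here is where Theorem~\ref{thm:serrestoll} enters. Write $\hhat:A(K)\to\Ahat(K)=A(K)\otimes\hat\ZZ$ for the natural map into the profinite completion. By Theorem~\ref{thm:serrestoll}(b), the map $\Ahat(K)\to\Adot$ is injective with image $\Closure\bigl(A(K)\bigr)$, so I may transport everything into $\Ahat(K)$: the image of $x$ is some element of $\Ahat(K)$ lying in the closure of the image of $H$. Now the image of $H$ in $\Ahat(K)$ is a subgroup, and its closure is the closure of $H\otimes\hat\ZZ$ taken inside $A(K)\otimes\hat\ZZ$. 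The point $x$ maps to an element of the original copy of $A(K)$ inside $\Ahat(K)$ (more precisely, to $x\otimes 1$), and it lies in that closed subgroup. One then argues, using that $\hat\ZZ$ is faithfully flat over $\ZZ$ (equivalently, examining the finite quotients $A(K)/NA(K)$ and the images of $x$ and $H$ in each), that if $x\otimes 1$ lies in the closure of $H\otimes\hat\ZZ$, then $x$ already lies in $H$: indeed for every $N$ the class of $x$ mod $NA(K)$ lies in the image of $H$, so $x\in H+NA(K)$ for all $N$, and since $\bigcap_N (H+NA(K))=H$ (because $A(K)/H$ is finitely generated, hence residually finite), we conclude $x\in H$.

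The main obstacle I expect is the last algebraic step: verifying cleanly that $\bigcap_N\bigl(H+NA(K)\bigr)=H$ inside the finitely generated abelian group $A(K)$, and that membership in $\Closure(H)\subseteq\Adot$ really does force membership in every $H+NA(K)$. The first part follows from the Mordell--Weil theorem (so $A(K)/H$ is finitely generated, hence its torsion is finite and its free part is detected by finite quotients), and the second is exactly the content of Theorem~\ref{thm:serrestoll}(b) together with the definition of the topology on $\Ahat(K)$: a neighborhood basis of $0$ is given by the open subgroups $\ker\bigl(\Ahat(K)\to A(K)/NA(K)\bigr)$. Once these two facts are in hand, the corollary follows formally, and no further input beyond Theorem~\ref{thm:serrestoll} and hypothesis~\eqref{eqn:VKVdCAK} is needed.
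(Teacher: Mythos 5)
Your proof is correct, but it takes a genuinely different route from the paper's. Both arguments share the first half: from hypothesis~\eqref{eqn:VKVdCAK} one gets the chain $V(K)\cap H\subseteq \Vdot\cap\Closure(H)\subseteq \Vdot\cap\Closure\bigl(A(K)\bigr)=V(K)$, reducing the problem to showing that a $K$-rational point $Q\in V(K)$ lying in $\Closure(H)$ must already lie in $H$. For this last step the paper reduces modulo $\gp_v$ at places of good reduction, deduces that $Q\bmod\gp_v\in H\bmod\gp_v$ for almost all $v$, and then appeals to Theorem~\ref{thm:serrestoll}(c) (injectivity of $\Ahat(K)\to\prod_{v\in S}A(\FF_v)$). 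You instead work entirely inside $\Ahat(K)$ using Theorem~\ref{thm:serrestoll}(b): identifying $\Closure\bigl(A(K)\bigr)$ with $\Ahat(K)=\varprojlim A(K)/NA(K)$, you observe that $Q\in\Closure(H)$ forces $Q\in H+NA(K)$ for every $N$, and then use the Mordell--Weil theorem to see that $A(K)/H$ is a finitely generated abelian group, hence $\bigcap_N N\bigl(A(K)/H\bigr)=0$, giving $\bigcap_N\bigl(H+NA(K)\bigr)=H$. This is a cleaner and more elementary finish: it avoids any appeal to reduction modulo primes and to part~(c) of the Serre--Stoll theorem, at the cost of invoking Mordell--Weil directly. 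One small caution: the closure of the image of $H$ in $\Ahat(K)$ is $\varprojlim\bigl(H+NA(K)\bigr)/NA(K)$, which need not literally coincide with $H\otimes\hat\ZZ$; your parenthetical reformulation in terms of the finite quotients $A(K)/NA(K)$ is the one that is actually correct, and it is what your argument uses, so no harm is done.
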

\begin{proof}
Using the assumption~\eqref{eqn:VKVdCAK}, we have
\begin{equation}
  \label{eqn:VKHVK}
  V(K)\cap H 
  \subset \Vdot\cap\Closure(H)
  \subset \Vdot \cap \Closure\bigl(A(K)\bigr)
  = V(K),
\end{equation}
Now let \text{$Q\in\Vdot\cap\Closure(H)\subset V(K)$} and suppose
that~$Q\notin H$.  Then by definition there is a sequence of points
\[
  P_1,P_2,P_3,\dots\in H
  \quad\text{such that}\quad
  \lim_{n\to\infty} P_n = Q\quad\text{in the ad\`elic topology.}
\]
In particular, if~$A$ has good reduction at~$v\in M_K^0$, then
\[
  Q \equiv P_n \pmod{\gp_v}
  \quad\text{for all sufficiently large $n$.}
\]
But every~$P_n$ is in~$H$, so we conclude that for all but finitely
many~$v\in M_K$ we have
\[
   Q \bmod\gp_v \in H\bmod\gp_v.
\]
\par
The group~$A(K)$ has no non-trivial divisible elements, so the 
natural map \text{$A(K)\to\Ahat(K)$} is an injection. 
It follows from Theorem~\ref{thm:serrestoll}(c) that \text{$Q\in H$}.
This contradiction proves the inclusion
\[
  \Vdot\cap\Closure(H)\subset H,
\]
which completes the proof of Corollary~\ref{corXKH}.
\end{proof}

We now have the tools needed to prove the main result of this section.

\begin{proof}[Proof of Theorem~\ref{thm:abv}]
Let~$H=\ZZ P$ be the (free) subgroup of~$A(K)$ generated by~$P$,
so in particular \text{$\Orbit_d(P)\subset H$}.
Then
\begin{equation}
  \label{eqn:VCdPVKH}
  \Vdot \cap \Closure\bigl(\Orbit_d(P)\bigr)
  \subset \Vdot\cap\Closure(H) = V(K)\cap H.
\end{equation}
where the equality is from Corollary~\ref{corXKH}.  Now let
\[
  Q\in \Vdot \cap \Closure\bigl(\Orbit_d(P)\bigr).
\]
It follows from~\eqref{eqn:VCdPVKH} that~$Q\in V(K)$ is a~$K$-rational
point of~$V$ and further that~$Q\in H$, so~$Q=mP$ for some integer~$m$.
\par
Suppose now that~\text{$Q\notin\Orbit_d(P)$}. Then we can find an
infinite sequence of positive integers~$\{r_1,r_2,r_3,\dots\}$ so that
\[
  \lim_{i\to\infty} d^{r_i}P = Q = mP
  \quad\text{in the ad\`elic topology.}
\]
Theorem~\ref{thm:serrestoll}(a,b) tells us that
\[
  \Closure(H) = H \otimes\hat\ZZ,
\]
which implies that $d^{r_i} \to m$ in~$\hat\ZZ$ as~$i\to\infty$.
Hence for every prime~$p$ we have
\[
  m = \operatornamewithlimits{\text{$p$}-lim}_{i\to\infty} d^{r_i}.
\]
Taking~$p$ to be any prime dividing~$d$ (this is where we use the
assumption that~$d\ge2$), we find that~$m=0$. On the other hand,
if~$p$ is a prime not dividing~$d$, then~$|d|_p=1$, so~$|m|_p=1$.
This contradiction shows that
\[
  \Vdot \cap \Closure\bigl(\Orbit_d(P)\bigr)
  \subset \Orbit_d(P),
\]
which completes the proof of Theorem~\ref{thm:abv}.
\end{proof}

\begin{remark}
More generally, let $\f:A\to A$ be a $K$-endomorphism of~$A$ with the
property that the subring $\ZZ[\f]\subset\End(A)$ is an integral
domain. (If~$A$ is geometrically simple, this will be true for any
endomorphism~$\f$ of infinite order.) Then we can repeat the proof of
Theorem~\ref{thm:abv}, \emph{mutatis mutandis}, using the
subgroup~$H=\ZZ[\f]P$ and the fact that the fraction field
of~$\ZZ[\f]$ is a number field. Note that if~$A$ is not simple, then
it is possible for~$\ZZ[\f]$ to have zero-divisors.
\end{remark}





\end{document}